 \DeclareMathOperator{\frat}{Frat}
\DeclareMathOperator{\ssl}{SL}
  \DeclareMathOperator{\diam}{diam}
\DeclareMathOperator{\sym}{Sym}
\DeclareMathOperator{\dist}{dist}
\DeclareMathOperator{\z}{{Z}}
\newtheorem{thm}{Theorem}
\newtheorem{cor}[thm]{Corollary}
 \newtheorem{lemma}[thm]{Lemma}
\newtheorem{prop}[thm]{Proposition}
\numberwithin{equation}{section}
\renewcommand{\footnote}{\endnote}
\newcommand{\ignore}[1]{}\makeglossary
\begin{document}
	\bibliographystyle{amsplain}
	\subjclass{ 20D60, 05C25}
	\title[The virtually generating graph of a profinite group]{The virtually generating graph\\ of a profinite group}

	\author{Andrea Lucchini}
	\address{Andrea Lucchini\\ Universit\`a degli Studi di Padova\\  Dipartimento di Matematica \lq\lq Tullio Levi-Civita\rq\rq\\ Via Trieste 63, 35121 Padova, Italy\\email: lucchini@math.unipd.it}
	

\begin{abstract} We consider the graph $\Gamma_{\rm{virt}}(G)$ whose vertices are the elements of a finitely generated profinite group $G$  and where two vertices $x$ and $y$ are adjacent if and only if they topologically generate an open subgroup of $G$. We investigate the connectivity of the graph $\Delta_{\rm{virt}}(G)$ obtained from $\Gamma_{\rm{virt}}(G)$ by removing its isolated vertices.	In particular we prove that for every positive integer $t$, there exists a finitely generated prosoluble group $G$ with the property that $\Delta_{\rm{virt}}(G)$ has precisely $t$ connected components. Moreover we study the graph  $\tilde \Gamma_{\rm{virt}}(G)$,
whose vertices are again the elements of $G$ and where two  vertices are adjacent if and only if there exists a minimal generating set of $G$ containing them. In this case we prove that the subgraph $\tilde \Delta_{\rm{virt}}(G)$ obtained removing the isolated vertices is connected and has diameter at most 3.
\end{abstract}
	\maketitle

\section{Introduction}

The generating graph of a finite group is an interesting notion and has been much studied in the literature. In \cite{pgg} the investigation on this topic has been extended to the context of profinite groups. Let $G$ be a 2-generated profinite group. The generating graph $\Gamma(G)$ is defined
with vertex set $G$ where two elements $x$ and $y$ are connected if and only if $x$ and $y$ (topologically) generate $G.$ An open conjecture is that if $G$ is a finite group, then the subgraph $\Delta(G)$ obtained from $\Gamma(G)$ by
removing its isolated vertices is connected. In \cite{pgg} it is proved that the conjecture is true if $G$ is prosoluble but an example is given showing that it fails for arbitrary finitely
generated profinite groups. Recently, during the workshop \lq Groups with Geometrical and Topological Flavours\rq\ Yiftach Barnea proposed to investigate a related graph. Following his suggestion we define the virtually generating graph $\Gamma_{\rm{inv}}(G)$ associated to a profinite group $G$ as the graph whose vertices are the elements of $G$ and in which two elements $x$ and $y$ are connected if and only if $x$ and $y$ generate (topologically) an open subgroup of $G$.
In particular we address the question whether the graph 
$\Delta_{\rm{virt}}(G)$ obtained from $\Gamma_{\rm{virt}}(G)$ by
removing its isolated vertices is connected.
The answer is negative. Let $G_p=(\ssl(2,2^p))^{\delta_p},$ where $p$ is a prime and $\delta_p$ is the largest positive integer with the property that the direct power  $(\ssl(2,2^p))^{\delta_p}$ can be generated by 2-elements and consider $G=\prod_p G_p.$ In \cite[Section 3]{pgg}, it is proved
that the graph $\Delta(G)$ has $2^{\aleph_0}$ connected components. This is a consequence of the fact that the diameter of
$\Delta(G_p)$ tends to infinity with $p.$ In Section \ref{esempiouno} we will see that also $\Delta_{\rm{virt}}(G)$
 has $2^{\aleph_0}$ connected components. In Section  \ref{esempiodue} we will prove a more surprising results, considering the graph $\Delta_{\rm{virt}}(G)$, when $G$ is prosoluble.  In \cite{diam} it was proved  that if $G$ is a finite soluble group, then $\Delta(G)$ is connected, with diameter at most 3, and this statement has been generalized in \cite[Theorem 3]{pgg} to the case of prosoluble groups. However 
 this result cannot extended to $\Delta_{\rm{virt}}(G).$ Indeed we prove:
 
\begin{thm}\label{tantec}
For every positive integer $t$, there exists a finitely generated prosoluble group $G$ with the property that
$\Delta_{\rm{virt}}(G)$ has precisely $t$  connected components.
\end{thm}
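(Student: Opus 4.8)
The plan is to build $G$ as a direct product (or more precisely an inverse limit of direct products) of finite soluble groups, carefully chosen so that the "virtual generation" relation splits the non-isolated elements into exactly $t$ clumps. The key point distinguishing $\Delta_{\rm{virt}}(G)$ from $\Delta(G)$ is that in the virtual graph two elements $x,y$ are adjacent when $\langle x,y\rangle$ is \emph{open}, i.e. of finite index; for an infinite product $G=\prod_i S_i$ of finite groups this is a much weaker condition than generating $G$ itself, and whether it holds is controlled by the projections to each $S_i$ only \emph{up to finitely many exceptions}. So the strategy is: find a finite soluble group $S$ admitting two "incompatible" kinds of generating pairs — informally, pairs whose associated $\aut(S)$-orbit data cannot be connected within the generating graph $\Delta(S)$ — and then take $G$ to be an infinite product of copies of $S$ (or of a family $S_n$) so that the obstruction survives in the limit and produces several components rather than one.

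First I would isolate the right finite building block. Recall (this is the mechanism behind \cite[Section 3]{pgg}) that for a finite group $S$ the components of $\Gamma(S^k)$ for large $k$ are governed by equivalence classes of generating pairs under a relation coming from the crowns/chief factors of $S$; when $S$ has a non-Frattini chief factor on which the generating pairs fall into $\aut$-orbits that the graph cannot bridge, $\Delta(S^k)$ becomes disconnected for $k\gg 0$. For the \emph{virtual} graph over the infinite product $G=\prod_{k} S^{k!}$ (or a similar cofinal product), an element $x=(x_k)_k$ is non-isolated precisely when all but finitely many $x_k$ lie in a generating pair of $S^{k!}$, and two non-isolated $x,y$ are adjacent in $\Gamma_{\rm{virt}}(G)$ iff for all but finitely many $k$ the pair $(x_k,y_k)$ generates an open — hence finite-index, hence (since $S^{k!}$ is finite) the whole — subgroup. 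Thus $\Delta_{\rm{virt}}(G)$ essentially computes a "limit" of the component structure of the $\Delta(S^{k!})$, and the number of components stabilises to the number of persistent classes. So I would choose the local data so that this stable count is exactly $t$: e.g. take $S$ so that the relevant chief factor is a vector space over $\mathbb F_q$ on which a point stabiliser acts with exactly $t$ orbits of the appropriate type (the rank-one-type configurations of \cite{diam}), giving $t$ classes that can never merge.

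Concretely the construction I would write down is: fix a prime $q$ and an irreducible module giving rise to $t$ "types", let $S_n$ be a finite soluble group (an extension of that module by a soluble group acting with $t$ relevant orbits), put $H=\prod_n S_n^{r_n}$ with the multiplicities $r_n\to\infty$ chosen so that $\Delta(S_n^{r_n})$ already has (at least, and eventually exactly) $t$ components, and then let $G=H$ or a suitable finite-index-closed subgroup of $H$. Then: (i) show $G$ is finitely generated and prosoluble — immediate from the choices; (ii) classify the isolated vertices of $\Gamma_{\rm{virt}}(G)$ — an element is isolated iff its projection fails to extend to a generating pair in cofinitely many factors, a cofinitely-supported condition; (iii) for the non-isolated vertices, transfer the component count from the finite graphs: two non-isolated elements are in the same component of $\Delta_{\rm{virt}}(G)$ iff their "eventual type vectors" agree, and all $t$ type-vectors are realised, so there are exactly $t$ components. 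Step (iii) — showing no two of the $t$ classes can be connected by a path of open-generation edges, i.e. that the obstruction is genuinely stable and not destroyed by the extra freedom of finite-index subgroups — is the part I expect to be the main obstacle; it will require a careful analysis, in the spirit of the crown-based arguments of \cite{pgg,diam}, of how the type data behaves along edges of $\Gamma_{\rm{virt}}$, plus a matching lower-bound construction exhibiting $t$ genuinely distinct non-isolated vertices. The upper bound (at most $t$ components) should follow more routinely once the type invariant is defined and shown to be a component invariant.
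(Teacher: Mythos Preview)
Your proposal has a genuine gap at its core, and the paper's construction is quite different from what you sketch.

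The mechanism you want to exploit does not exist in the soluble world. You propose to build $G$ as (essentially) $\prod_n S_n^{r_n}$ for finite soluble $S_n$, with the $r_n$ chosen so that $\Delta(S_n^{r_n})$ already has $t$ components, and then to ``lift'' those $t$ components to $\Delta_{\rm virt}(G)$. But by the very result the paper quotes from \cite{diam}, for \emph{every} finite soluble group $H$ the graph $\Delta(H)$ is connected with diameter at most $3$. So $\Delta(S_n^{r_n})$ never has more than one component, and there is nothing to lift. The whole point of Theorem~\ref{tantec} is precisely that the passage from $\Delta$ to $\Delta_{\rm virt}$ breaks this connectivity in the prosoluble setting, so the disconnection must be manufactured by a genuinely profinite obstruction, not imported from the finite quotients. (Indeed Lemma~\ref{casofinito} shows $\tilde\Delta_{\rm virt}$, and in fact $\Delta_{\rm virt}$, is connected for every finite group.)

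There is a second, independent problem: your product $\prod_n S_n^{r_n}$ with $r_n\to\infty$ is never finitely generated when the $S_n$ are nontrivial and soluble. Any nontrivial soluble $S_n$ has a nontrivial abelian quotient, so the abelianisation of the product surjects onto an infinite-rank elementary abelian group. Passing to a closed subgroup of finite index does not repair this.

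The paper's construction is of a different nature. It takes $G=\bigl(\prod_{\omega\in\Omega}N_\omega\bigr)\rtimes H$ where $H\cong C_2^{2t}$ is finite, $\Omega$ is a carefully chosen finite subset of $\mathbb F_2^{2t}$, and each $N_\omega\cong\mathbb Z_{p_\omega}^3$ is a rank-$3$ pro-$p_\omega$ module on which $H$ acts by a sign determined by $\omega$. The key obstruction is that $\mathbb Z_{p_\omega}^3$ cannot be topologically $2$-generated, so for $\langle g_1,g_2\rangle$ to meet each $N_\omega$ with finite index the $H$-action must contribute; a linear-algebra argument over $\mathbb F_2$ (Proposition~\ref{almeno2}) then forces the $H$-parts of $g_1,g_2$ to lie in one of $t$ specified rank-$2$ subgroups $\langle y_{2i-1},y_{2i}\rangle$ of $H$. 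This gives $t$ disjoint pieces $\Sigma_1,\dots,\Sigma_t$, and a direct computation (Lemmata~\ref{condi} and~\ref{tante}) shows each piece contributes exactly one component. None of this is visible in any finite quotient of $G$.
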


In the second part of the paper we extend to the profinite groups the notion of independence graph, given in \cite{ind} for finite groups, and we introduce the new definition of virtually independence graph of a profinite group.
Let $G$ be a finitely generated profinite group. A generating set $X$ of $G$ is said to be minimal if no proper subset of $X$ (topologically) generates $G$. We denote by $d(G)$ and $m(G)$, respectively, the smallest and the largest cardinality of a minimal generating set of a finite group $G.$ A nice result in universal algebra, the Tarski irredundant basis theorem (see for example \cite[Theorem 4.4]{bs}), implies that if $G$ is a finite group, then, for every positive integer $k$ with $d(G)\leq k\leq m(G),$ $G$ contains an independent generating set of cardinality $k$. In section \ref{tacchino} we will prove that this result can be extended
to the profinite case. In this case we define $m(G)$ as the supremum of the sizes of the minimal generating set of $G$.
\begin{thm}
	Let $G$ be a finitely generated profinite group. For every positive integer $k$ with $d(G)\leq k\leq m(G),$ $G$ contains an independent generating set of cardinality $k$. Moreover $m(G)$ is finite if and only if the Frattini subgroup of $G$ has finite index in $G.$
\end{thm}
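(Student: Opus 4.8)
The plan is first to reduce to the case $\frat(G)=1$, where $\frat(G)$ is the intersection of the maximal open subgroups of $G$. Since in any profinite group a proper closed subgroup is contained in a maximal open subgroup, the elements of $\frat(G)$ are non-generators: $\overline{\langle H,\frat(G)\rangle}=G$ forces $H=G$ for every closed $H\le G$. From this I get $d(G)=d(G/\frat(G))$ (lift a minimal generating set through $\frat(G)$) and, more importantly, that a subset of $G$ generates $G$ if and only if its image generates $G/\frat(G)$; hence a subset is an independent generating set of $G$ if and only if its image is one of $G/\frat(G)$ of the same cardinality. Therefore $m(G)=m(G/\frat(G))$ and $G$ has an independent generating set of size $k$ exactly when $G/\frat(G)$ does; since moreover $\frat(G/\frat(G))=1$, we may assume $\frat(G)=1$.

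If $\frat(G)=1$ and $G$ is finite, the first assertion is exactly the Tarski irredundant basis theorem \cite[Theorem 4.4]{bs} applied to the finite group $G$, and $m(G)<\infty$ with $\frat(G)$ of finite index. Assume now $\frat(G)=1$ and $G$ infinite; I must produce an independent generating set of every finite size $k\ge d(G)$, which will also give $m(G)=\infty$. Being finitely generated $G$ has only countably many open subgroups, hence is second countable, so I can fix a chain $G=X_0\supsetneq X_1\supsetneq\cdots$ of open normal subgroups with $\bigcap_j X_j=1$ and each $X_{j-1}/X_j$ a chief factor of $G$. The crucial structural point is that infinitely many of these chief factors are non-Frattini. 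Otherwise all chief factors of the series below some level $r$ are Frattini, and then, applying inside each finite quotient $G/X_j$ the elementary fact that a chain of Frattini chief factors lies in the Frattini subgroup, one gets $X_r/X_j\le\Phi(G/X_j)$ for every $j>r$; since the preimages in $G$ of the groups $\Phi(G/X_j)$ have intersection equal to $\frat(G)=1$, this forces $X_r=1$, a contradiction. Consequently the number $\lambda_j$ of non-Frattini chief factors of $G/X_j$ tends to infinity, and using the fact that $m(Y)\ge$ (number of non-Frattini chief factors of $Y$) for a finite group $Y$, we conclude $m(G/X_j)\to\infty$.

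It remains to lift. Fix $k\ge d(G)$ and choose $j$ with $m(G/X_j)\ge k$. As $d(G/X_j)\le d(G)\le k\le m(G/X_j)$, the finite Tarski theorem gives an independent generating set of $G/X_j$ of size $k$. I then lift it successively along $X_j\supsetneq X_{j+1}\supsetneq\cdots$: at the step $G/X_i\to G/X_{i+1}$ the kernel $X_i/X_{i+1}$ is a minimal normal subgroup of the finite group $G/X_{i+1}$, so by the Gaschütz-type lifting lemma — a generating $k$-tuple of $Y/N$ lifts to a generating $k$-tuple of $Y$ whenever $N$ is minimal normal and $k\ge d(Y)$, which holds here since $k\ge d(G)\ge d(G/X_{i+1})$ — I can lift compatibly, and independence is preserved for free in this direction, because a sub-$(k{-}1)$-tuple failing to generate $G/X_i$ also fails to generate $G/X_{i+1}$ (as $G/X_i$ is a quotient of $G/X_{i+1}$). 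Passing to the inverse limit over $i\ge j$ yields $(g_1,\dots,g_k)\in G^k$ whose image in every $G/X_i$ is an independent generating set of size $k$; since $\bigcap_i X_i=1$ this tuple is an independent generating set of $G$. Collecting the cases also settles the final assertion: $m(G)=m(G/\frat(G))$ is finite precisely when $G/\frat(G)$ is finite, i.e. when $\frat(G)$ has finite index in $G$.

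The delicate points I expect to spend the most effort on are: the ``infinitely many non-Frattini chief factors'' step, which hinges on the climbing lemma together with the identity $\bigcap_j(\text{preimage of }\Phi(G/X_j))=\frat(G)$; securing the lifting lemma in exactly the form needed (the abelian case is classical Gaschütz, the non-abelian case for a single minimal normal subgroup also holds but must be invoked carefully); and verifying that the lower bound $m(Y)\ge$ (number of non-Frattini chief factors of $Y$) for finite groups is available in this shape. The easy-looking but indispensable companion fact — that independence survives passing to a smaller quotient — is precisely what makes the inverse-limit lifting go through without any further repair.
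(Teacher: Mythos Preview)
Your argument is essentially sound, but it takes a markedly longer route than the paper's, and one of the lemmas you flag as ``delicate'' is genuinely nontrivial.

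\medskip

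\textbf{Comparison with the paper.} For the first assertion the paper's proof is a single short paragraph and avoids the Frattini reduction, the chief series, and the iterated lifting entirely. Given $k\le m(G)$, by definition there is a minimal generating set $\{x_1,\dots,x_t\}$ with $t\ge k$; for each $i$ choose an open normal $N_i$ witnessing that $\langle X\setminus\{x_i\}\rangle\neq G$ and set $N=\bigcap_i N_i$. Then $\{x_1N,\dots,x_tN\}$ is a minimal generating set of the finite group $G/N$, so $d(G/N)\le k\le t\le m(G/N)$; now Tarski gives a minimal generating set of $G/N$ of size exactly $k$, and a \emph{single} application of the profinite Gasch\"utz lemma lifts it to a (necessarily minimal) generating set of $G$. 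In particular your step-by-step lifting along the chief series and the passage to an inverse limit are unnecessary: the profinite Gasch\"utz lemma already works for an arbitrary closed normal subgroup, so you can lift from $G/X_j$ to $G$ in one shot.

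\medskip

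\textbf{What each approach buys.} The paper's argument is strictly simpler for the interval statement: it never needs to know \emph{why} $m(G/N)$ is large, only that some minimal generating set of large size descends to a finite quotient intact. Your approach, by contrast, proves the second assertion (that $m(G)<\infty$ iff $|G:\frat(G)|<\infty$) as a by-product of the structural analysis, whereas the paper simply cites \cite[Theorem~5]{min} for this. So your route is more self-contained at the price of extra machinery.

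\medskip

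\textbf{The genuine caveat.} The inequality $m(Y)\ge(\text{number of non-Frattini chief factors of }Y)$ for finite $Y$ is true, but it is not a one-liner and you should either supply a proof or a reference; it is the only place where your argument could be called incomplete. The other points you flagged are fine: the identity $\bigcap_j \pi_j^{-1}(\Phi(G/X_j))=\frat(G)$ follows because every open maximal subgroup contains some $X_j$, and the lifting lemma in the form you need is exactly the profinite Gasch\"utz lemma (no restriction to abelian or minimal normal kernels is required).
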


 We denote by $\tilde\Gamma(G)$  
the graph whose vertices are the elements $G$ and in which two vertices $x$ and $y$ are joined by an edge if and only if $x\neq y$ and there exists a finite minimal generating set of $G$ containing $x$ and $y$. Roughly speaking,  $x$ and $y$ are adjacent vertices of $\tilde\Gamma(G)$ if they are \lq independent\rq, so we  call $\Gamma(G)$ the {\slshape{independence graph}} of $G$. In Section
\ref{inde}, extending results proved in \cite{ind} in the case of finite groups,  we show that if $G$ is a finitely generated profinite group and $g$ is an isolated vertex of $\tilde\Gamma(G),$ then either $g$ belongs to the Frattini subgroup of $G$ or it is a topological generator of $G$. Moreover the graph obtained from  $\tilde\Gamma(G)$ by removing the isolated vertices is always connect. Also in this case instead of  generating subsets, we may consider virtually generating subsets, i.e. subsets generating an open subgroup of $G$. In particular we 
say that two elements $x$ and $y$ are virtually independent if there exists a minimal generating set of an open subgroup of $G$ containing them. In Section \ref{ultima} we define
the virtually independent graph $\tilde \Gamma_{\rm{virt}}(G)$ of $G$ as the graph whose vertices are the elements of $G$ and in which two vertices are joined by an edge if and only if they are virtually independent. Moreover we denote by  $\tilde \Delta_{\rm{virt}}(G)$ the subgraph induced by the non-isolated vertices of the virtually independent graph of $G$.
We prove in particular the following results:
\begin{thm}Let $G$ be a finitely generated profinite group. If $\tilde \Gamma_{\rm{virt}}(G)$ contains a non-trivial isolated vertex $g,$ then either $g$ is a topological generator of $G$ or one of the following occurs:
	\begin{enumerate}
		\item $G\cong C_{p^n}$ is a cyclic group of $p$-power order and all the vertices of $\tilde \Gamma_{\rm{virt}}(G)$ are isolated.
		\item $G\cong \mathbb Z_p$, the group of the $p$-adic integers, and  all the vertices of $\tilde \Gamma_{\rm{virt}}(G)$ are isolated.
		\item $G\cong Q_{2^n}=\langle x, y \mid x^{2^{n-1}}, y^2=x^{2^{n-2}}, x^{-1}yx=y^{-1}\rangle$ is a generalized quaternion group and $g=y^2$ is the unique non-trivial isolated vertex of  $\tilde \Gamma_{\rm{virt}}(G).$ 
	\end{enumerate}
\end{thm}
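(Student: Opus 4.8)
The plan is to convert ``$g$ is isolated'' into a statement about Frattini subgroups and then read off $G$. Set $N=\overline{\langle g\rangle}$, a procyclic closed subgroup, and assume $g\neq 1$ and $N\neq G$ (if $N=G$ then $g$ is a topological generator and there is nothing to do). First I would prove the criterion: \emph{$g$ is isolated in $\tilde\Gamma_{\rm{virt}}(G)$ if and only if $g\in\frat(H)$ for every open subgroup $H$ with $N\subsetneq H\le G$}. Indeed, if $g$ lies in a minimal generating set $X$ of an open subgroup $H$ with $|X|\ge 2$, then $H\supseteq N$, minimality forbids $H=N$, so $H\supsetneq N$, and $g\in X$ forbids $g\in\frat(H)$; conversely if $g\notin\frat(H)$ for some open $H\supsetneq N$ then $g$ is not a topological generator of $H$ (since $\overline{\langle g\rangle}=N\neq H$), so the result of Section~\ref{inde} applied to $H$ yields a finite minimal generating set of $H$ of size $\ge 2$ containing $g$, and $g$ is not isolated.

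Assume now $g$ isolated, so $N\le\frat(H)$ for all open $H$ with $N\subsetneq H\le G$. The next step is to show $N$ is open: fix open normal subgroups $G=G_0\ge G_1\ge\cdots$ with trivial intersection and put $H_n=NG_n$; if $N$ were not open then $H_n\supsetneq N$, and for large $n$ the finite cyclic group $H_n/G_n\cong N/(N\cap G_n)$ is non-trivial and topologically generated by the image of $g$, hence that image lies outside $\frat(H_n/G_n)$, and since the image of $\frat(H_n)$ in $H_n/G_n$ lies in $\frat(H_n/G_n)$ we get $g\notin\frat(H_n)$, contradicting the criterion. So $N$ is open and $G/N$ finite. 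Now pick an open subgroup $H$ minimal above $N$; from $N\le\frat(H)$ and $\frat(H)\le N$ (as $N$ is a maximal open subgroup of $H$) we get $\frat(H)=N$, so $N$ is the unique maximal open subgroup of $H$, and $H/N$ has a unique maximal subgroup and trivial Frattini subgroup, forcing $|H/N|=p$ for a prime $p$. Then $d(H)=1$, $H$ is procyclic, and a procyclic group with $H/\frat(H)\cong C_p$ is pro-$p$; thus $H\cong\mathbb Z_p$ or $C_{p^m}$ and $N=\frat(H)$ is a non-trivial procyclic pro-$p$ group generated by $g$. Repeating this for every minimal open over-group of $N$ shows they are all procyclic pro-$p$ with Frattini subgroup $N$, hence of index $p$ (the prime forced by $N\neq 1$); so every minimal subgroup of $\overline G:=G/N$ has order $p$, $\overline G$ is a finite $p$-group, and $G$ — an extension of the pro-$p$ group $N$ by the finite $p$-group $\overline G$ — is pro-$p$. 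If $\overline G$ is cyclic then $d(G)=1$, so $G$ is procyclic pro-$p$, i.e. $\mathbb Z_p$ or $C_{p^k}$: cases (1) and (2), in which every open subgroup is procyclic pro-$p$ and so has only minimal generating sets of size $1$, whence all vertices of $\tilde\Gamma_{\rm{virt}}(G)$ are isolated.

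The core of the argument is $\overline G$ non-cyclic, and it rests on a sub-lemma: \emph{a pro-$p$ group $H$ with $d(H)=2$, non-trivial Frattini subgroup, and all maximal open subgroups procyclic is isomorphic to $Q_8$, with $\frat(H)\cong C_2$}. I would prove it by checking that an abelian such $H$ (a product of two procyclic pro-$p$ groups) always has either trivial Frattini subgroup or a non-procyclic maximal subgroup, and that a non-abelian such $H$ has $\frat(H)\le Z(H)$ (two distinct abelian maximal subgroups multiply to $H$ and contain $\frat(H)$), hence $Z(H)=\frat(H)$ and $H$ has class $2$; then $[a,b]^p=[a^p,b]=1$ for topological generators $a,b$ (as $a^p\in\frat(H)=Z(H)$), so $[H,H]\cong C_p$, $\frat(H)$ cannot be torsion-free, $\frat(H)$ is finite, $H$ is finite, and a finite non-cyclic $p$-group all of whose maximal subgroups are cyclic is $C_p\times C_p$ or $Q_8$. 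Granting this, a finite $p$-group is cyclic, or contains $C_p\times C_p$, or is generalized quaternion. If $\overline G$ contains $C_p\times C_p$, take $H\le G$ open with $H/N\cong C_p\times C_p$; then $\frat(H)=N$, $d(H)=2$, and the $p+1$ maximal subgroups of $H$ are minimal open over-groups of $N$, hence procyclic, so the sub-lemma gives $p=2$, $H\cong Q_8$, $N\cong C_2$. Thus $N$ is finite and open, $G$ is a finite $2$-group, $g$ an involution, and the criterion forces $g$ to be the \emph{unique} involution of $G$ (a second involution $h$ would make $\langle g,h\rangle$ a $C_2\times C_2$ or a dihedral group of order $\ge 8$ with $g\notin\frat(\langle g,h\rangle)$ and $\langle g,h\rangle\supsetneq\langle g\rangle$). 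A non-cyclic finite group with a unique involution is generalized quaternion, so $G\cong Q_{2^n}$ with $g=y^2=x^{2^{n-2}}$; conversely $y^2\in\frat(K)$ whenever $\langle y^2\rangle\subsetneq K\le G$ (such $K$ is cyclic of order $\ge 4$ or generalized quaternion, and in both cases $\frat(K)$ contains the unique involution of $K$), so $y^2$ is isolated, and the previous sentence shows it is the only non-trivial isolated vertex — case (3). Finally, if $\overline G\cong Q_{2^m}$ with $m\ge 3$ one gets a contradiction by induction on $m$: for $m=3$ the three maximal subgroups of $G$ reduce mod $N$ to the cyclic index-$2$ subgroups of $Q_8$ and are therefore procyclic, so the sub-lemma forces $G\cong Q_8$ with $\frat(G)\cong C_2=N$, hence $G/N\cong C_2\times C_2\not\cong Q_8$; for $m\ge 4$ a maximal subgroup $M$ of $G$ with $M/N\cong Q_{2^{m-1}}$ inherits the criterion and realises the case $\overline G\cong Q_{2^{m-1}}$, so induction finishes the proof.

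The hard part will be this last case. The hypothesis ``$N\le\frat(H)$ for every open $H\supsetneq N$'' seems to say very little, yet it has to be pushed far enough to force so many subgroups to be procyclic that only $Q_8$ — and its nested copies inside the $Q_{2^n}$ of case (3) — can occur, while the generalized-quaternion quotient $Q_{2^m}$ has to be removed by a descent; making the sub-lemma (especially its finiteness step) and this descent watertight is where essentially all the effort goes. By contrast the criterion, the elimination of the case ``$N$ not open'', and the identification of $N$ as a non-trivial procyclic pro-$p$ group with $G/N$ a finite $p$-group are straightforward once the criterion is available.
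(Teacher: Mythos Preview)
Your proof is correct, but it takes a markedly different and considerably longer route than the paper's.

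The paper's argument hinges on one short observation you do not use: in a \emph{finite} group $G$, if $g$ is isolated then for every $y\in G$ either $g\in\langle y\rangle$ or $y\in\langle g\rangle$ (otherwise $\{g,y\}$ is a minimal generating set of the finite subgroup $\langle g,y\rangle$, giving an edge). This instantly places $g$ in $Z(G)$ and forces $\langle g\rangle$ to contain every subgroup of prime order, so $G$ has a unique minimal subgroup and is therefore $C_{p^n}$ or $Q_{2^n}$ by the classical result. The infinite case is then a two-line reduction: $gN$ is isolated in each finite quotient $G/N$ with $g\notin N$ and $\langle g\rangle N\neq G$ (Lemma~\ref{ken}), so $G/N$ is cyclic of prime-power order or generalized quaternion; since no proper quotient of $Q_{2^n}$ is again generalized quaternion, the quaternion option forces $G$ finite, and otherwise $G$ is an inverse limit of cyclic $p$-groups, hence procyclic.

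Your approach replaces this with a structural analysis built on the Frattini criterion $g\in\frat(H)$ for every open $H\supsetneq\overline{\langle g\rangle}$. This criterion, the proof that $N$ is open, and the identification of minimal over-groups of $N$ as procyclic pro-$p$ groups are all sound. Your sub-lemma (a pro-$p$ group with $d=2$, non-trivial Frattini, and all maximal open subgroups procyclic is $Q_8$) is correct and your sketch of it works, though the finiteness step needs the observation that $\frat(H)=Z(H)$ is procyclic because it sits inside a procyclic maximal subgroup. The final trichotomy on $G/N$ (cyclic, contains $C_p\times C_p$, or is $Q_{2^m}$) is exhaustive by the unique-minimal-subgroup classification for $p$-groups, and your inductive elimination of $G/N\cong Q_{2^m}$ is valid. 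So the argument goes through, but at the cost of the sub-lemma and the descent, whereas the paper's comparability observation dispatches the finite case in a few lines and then lifts.
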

\begin{thm}
	If $G$ is a finitely generated profinite group, then $\tilde \Delta_{\rm{virt}}(G)$ is connected  and $\diam(\tilde \Delta_{\rm{virt}}(G))\leq 3.$
\end{thm}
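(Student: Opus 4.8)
The plan is to base everything on an explicit description of the edges of $\tilde\Gamma_{\rm{virt}}(G)$. First I would establish the following \emph{adjacency criterion}: for $x\neq y$ in $G$, one has $x\sim y$ in $\tilde\Gamma_{\rm{virt}}(G)$ if and only if $x\notin\overline{\langle y\rangle}$ and $y\notin\overline{\langle x\rangle}$. The forward implication is immediate, since any subset of a minimal generating set is a minimal generating set of the subgroup it generates. For the converse I would put $K=\overline{\langle x,y\rangle}$, so that $\{x,y\}$ is a minimal generating set of $K$, fix a descending chain $(N_i)$ of open normal subgroups of $G$ with $\bigcap_i N_i=1$, and use $\overline{\langle y\rangle}N_i\downarrow\overline{\langle y\rangle}$ and $\overline{\langle x\rangle}N_i\downarrow\overline{\langle x\rangle}$ to pick $i$ with $x\notin\overline{\langle y\rangle}N_i$ and $y\notin\overline{\langle x\rangle}N_i$. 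With $H=KN_i$, an open subgroup containing $x$ and $y$, adjoin topological generators $a_1,\dots,a_r$ of $N_i$; then $\{x,y,a_1,\dots,a_r\}$ generates $H$, and deleting redundant elements one at a time, always deleting some $a_j$ when possible, produces a minimal generating set of $H$ of size $\geq 2$ still containing $x$ and $y$: indeed $\overline{\langle x\rangle}N_i\not\ni y$ and $\overline{\langle y\rangle}N_i\not\ni x$ force that neither $x$ nor $y$ ever becomes removable. (The criterion also makes the classification of isolated vertices transparent, and I use the preceding theorem freely below.)

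Next I would dispose of the exceptional groups of that theorem. If $G\cong C_{p^n}$ or $G\cong\mathbb Z_p$ then $\tilde\Delta_{\rm{virt}}(G)$ is empty; if $G\cong Q_{2^n}$ then, by the criterion, $\tilde\Delta_{\rm{virt}}(G)$ is the graph on $Q_{2^n}\setminus\{1,y^2\}$, and since every nontrivial cyclic subgroup of $Q_{2^n}$ contains $y^2$ a short direct check gives connectedness and diameter $\leq 2$. In every remaining case the preceding theorem shows that the non-isolated vertices are exactly the elements $g$ with $g\neq 1$ and $\overline{\langle g\rangle}\neq G$; from now on $g$ and $h$ denote two such elements, and the goal is $\dist(g,h)\leq 3$.

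The distance analysis uses the criterion throughout. Write $A=\overline{\langle g\rangle}$ and $B=\overline{\langle h\rangle}$, proper nontrivial closed subgroups. If $g\neq h$, $g\notin B$ and $h\notin A$, then $g\sim h$. Otherwise we may assume $B\subseteq A$ (the opposite containment is symmetric, $g=h$ trivial), so in particular $h\in A$. If there is $w\notin A$ with $g\notin\overline{\langle w\rangle}$ and $h\notin\overline{\langle w\rangle}$, then $w$ is adjacent to both $g$ and $h$ and $\dist(g,h)\leq 2$. There remains the \emph{critical case}, in which every $w\notin A$ satisfies $h\in\overline{\langle w\rangle}$. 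Here I would still build a path $g-g_0-h_0-h$ of length $3$, as follows. Since $h$ is non-isolated there is $h_0$ with $h\sim h_0$, so $h_0\notin B$ and $h\notin\overline{\langle h_0\rangle}$; the critical hypothesis then forces $h_0\in A$, hence $h_0\in A\setminus B$ and $\overline{\langle h_0\rangle}\subseteq A$. It therefore suffices to find $g_0\notin A$ with $g\notin\overline{\langle g_0\rangle}$ and $h_0\notin\overline{\langle g_0\rangle}$, for then $g\sim g_0$ and $g_0\sim h_0$ (using $g_0\notin A\supseteq\overline{\langle h_0\rangle}$). Starting from an arbitrary witness $g_0'\notin A$ with $g\notin\overline{\langle g_0'\rangle}$ (one exists because $g$ is non-isolated), if $h_0\in\overline{\langle g_0'\rangle}$ one replaces $g_0'$ by a suitable element of $\overline{\langle g_0'\rangle}$ whose cyclic closure still leaves $A$ but ``skips past'' $h_0$; this is feasible because $h_0\notin B=\overline{\langle h\rangle}$ keeps $h_0$ at a controlled finite depth inside the procyclic group $A$, and it is in essence the $p$-adic order computation already present in $G\cong\mathbb Z_p\times\mathbb Z_q$, the simplest group for which $\diam(\tilde\Delta_{\rm{virt}}(G))=3$.

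The main obstacle is exactly this critical case: one has to show that the domination hypothesis ``$h\in\overline{\langle w\rangle}$ for every $w\notin A$'' never obstructs the choice of $g_0$ — that is, the topologically cyclic subgroups outside $A$ cannot be forced to nest so tightly around both $g$ and $h_0$ that no path of length $3$ exists. I expect this to reduce to a structural analysis of the lattice of topologically cyclic subgroups of $G$ not contained in $A$ (showing in particular that $A$ itself is procyclic and ruling out the ``rank two'' configurations that would otherwise leave no suitable $g_0$). Granting this, $\dist(g,h)\leq 3$ in every case, so $\tilde\Delta_{\rm{virt}}(G)$ is connected with $\diam(\tilde\Delta_{\rm{virt}}(G))\leq 3$.
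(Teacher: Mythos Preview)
Your adjacency criterion is correct, and its proof is essentially the argument behind the paper's Lemma~\ref{ken} made explicit; the treatment of the exceptional groups is also fine. But the proposal has a genuine gap: in what you call the \emph{critical case} you do not actually construct $g_0$. You write ``I expect this to reduce to a structural analysis\dots'' and ``Granting this\dots'', which is an acknowledgment that the step is missing. The manoeuvre you sketch---replacing $g_0'$ by an element of $\overline{\langle g_0'\rangle}$ whose closure still leaves $A$ yet omits $h_0$---is not justified: nothing you have established forces the procyclic group $\overline{\langle g_0'\rangle}$ to contain such an element, and the vague appeal to a ``$p$-adic order computation'' in $\mathbb Z_p\times\mathbb Z_q$ is not a proof (in fact $\mathbb Z_p\times\mathbb Z_q$ is procyclic and one can check its $\tilde\Delta_{\rm virt}$ has diameter $2$, so it is not the right model for the obstruction; the paper's witness for diameter $3$ is the nonabelian group of order $12$ with a cyclic Sylow $2$-subgroup).

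The paper avoids this difficulty entirely by a reduction to finite quotients. It first proves the finite case (Lemma~\ref{casofinito}) by induction on $|G|$, arguing with the prime factorisations of $|x|$, $|y|$, $|G|$ to locate a $p$-element adjacent to both, and falling back on the unique-minimal-subgroup classification when $G$ is a $p$-group. For infinite $G$ it then chooses an open normal $M$ with $xM\neq yM$ both nontrivial and $G/M$ neither cyclic of prime-power order nor generalized quaternion, so that $xM$ and $yM$ are non-isolated in $\tilde\Gamma_{\rm virt}(G/M)$; the finite case gives $\dist_{\tilde\Delta_{\rm virt}(G/M)}(xM,yM)\leq 3$, and Lemma~\ref{ken} lifts each edge back to $G$. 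Your adjacency criterion would slot naturally into this scheme as a replacement for Lemma~\ref{ken}, but the direct lattice-of-procyclic-subgroups argument you attempt in $G$ itself is, as written, incomplete.
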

We will exhibit an example, showing that the bound  $\diam(\tilde \Delta_{\rm{virt}}(G))\leq 3 $ given by the previous theorem is best possible.
\section{An example of a profinite group $G$ such that\\ $\Delta_{\rm{inv}}(G)$ has
	$2^{\aleph_0}$ connected components.}\label{esempiouno}

Let $G_p=(\ssl(2,2^p))^{\delta_p},$ where $p$ is a prime and $\delta_p$ is the largest positive integer with the property that the direct power  $(\ssl(2,2^p))^{\delta_p}$ can be generated by 2-elements. The graph $\Delta(G_p)$ is connected for every prime $p,$ and, by \cite[Theorem 1.3]{ele}, there exists an increasing sequence $(p_n)_{n\in \mathbb N}$ of odd primes, such that $\diam(\Delta(G_{p_n}))\geq 2^n$ for every $n\in \mathbb N.$ Consider the cartesian product $$G=\prod_{n\in \mathbb N}G_{p_n},$$ with the product topology, and for any $n\in \mathbb N,$ let $\pi_n: G\to G_{p_n}$ be the projection homomorphism. 

\begin{lemma}\label{cinque}
	Let $x=(x_n)_{n\in \mathbb N},$  $y=(y_n)_{n\in \mathbb N} \in G.$  The subgroup $\langle x, y \rangle$ is open in $G$ if and only if there exists $m\in \mathbb N$ such that $\langle x_n, y_n\rangle=G_{p_n}$ for any $n\geq m.$
\end{lemma}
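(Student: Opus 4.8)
The plan is to prove both implications by working with the characterization of open subgroups of a Cartesian product of finite groups. Recall that a subgroup $H$ of $G=\prod_{n\in\mathbb N}G_{p_n}$ is open if and only if it contains the kernel of some finite subproduct projection, i.e.\ there exists $m\in\mathbb N$ such that $\prod_{n\geq m}G_{p_n}\leq H$ (equivalently, $H$ has finite index and is closed); this is because a basis of open neighbourhoods of $1$ in $G$ is given by the sets $\prod_{n<m}\{1\}\times\prod_{n\geq m}G_{p_n}$.

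For the \lq\lq if\rq\rq\ direction, suppose there is $m\in\mathbb N$ with $\langle x_n,y_n\rangle=G_{p_n}$ for all $n\geq m$. Let $K=\overline{\langle x,y\rangle}$. Applying $\pi_n$ for each $n\geq m$ shows $\pi_n(K)=G_{p_n}$. I then want to conclude that $K$ contains $N:=\prod_{n\geq m}G_{p_n}$. This is where the first real work lies: one invokes the fact that for $n\geq m$ the groups $G_{p_n}=(\ssl(2,2^{p_n}))^{\delta_{p_n}}$ have pairwise non-isomorphic, nonabelian composition factors $\ssl(2,2^{p_n})$ (the $p_n$ are distinct primes, so the field sizes $2^{p_n}$ are distinct), hence a standard lemma on subdirect products of groups with coprime/non-isomorphic nonabelian chief factors (a Goursat-type argument, or the observation that $N/(K\cap N)$ would be a common quotient of incompatible pieces) forces $K\supseteq N$. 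Therefore $K$ is open. The finitely many coordinates $n<m$ cause no trouble since $N$ already has finite index in $G$.

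For the \lq\lq only if\rq\rq\ direction, suppose $\langle x,y\rangle$ is open. Then by the characterization above there is $m$ with $\prod_{n\geq m}G_{p_n}\leq\overline{\langle x,y\rangle}$, and projecting onto the $n$-th coordinate for $n\geq m$ gives $\langle x_n,y_n\rangle=G_{p_n}$, as required. This direction is essentially immediate.

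The main obstacle is the subdirect-product step in the \lq\lq if\rq\rq\ direction: one must be careful that surjectivity of all projections $\pi_n$ ($n\geq m$) genuinely yields containment of the full product $N$, which relies on $G_{p_n}$ having no common nonabelian composition factors across distinct $n$ and also on each $G_{p_n}$ itself being generated as a subdirect power in a controlled way. Since each $G_{p_n}$ is a direct power of a single nonabelian simple group and these simple groups are pairwise distinct for distinct $n$, the relevant lemma (e.g.\ the standard fact that a closed subgroup of $\prod_i S_i^{k_i}$ with all coordinate projections surjective and the $S_i$ pairwise non-isomorphic nonabelian simple must be the whole product) applies cleanly. I would cite this as the key ingredient rather than reprove it.
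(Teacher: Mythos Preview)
Your approach is essentially the same as the paper's: both directions rest on the fact that the factors $G_{p_n}$ have pairwise non-isomorphic nonabelian simple composition factors, and the ``if'' direction is a Goursat/subdirect-product argument in both. The paper is simply more economical. For ``only if'' it just observes
\[
|G:H|\ \ge\ \prod_{n}|G_{p_n}:\pi_n(H)|,
\]
so finiteness of the index forces $\pi_n(H)=G_{p_n}$ for all but finitely many $n$; no appeal to basic open neighbourhoods is needed. For ``if'' it sets $K=\prod_{n<m}G_{p_n}$ and cites Goursat to get $HK=G$, hence $H$ is open.

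One point in your ``if'' direction deserves tightening. The lemma you invoke (a closed subdirect product of $\prod_i S_i^{k_i}$ with the $S_i$ pairwise non-isomorphic nonabelian simple is the full product) is stated for subgroups of $N=\prod_{n\ge m}G_{p_n}$, but your $K=\overline{\langle x,y\rangle}$ sits inside all of $G$, and you only know $\pi_n(K)=G_{p_n}$ for $n\ge m$. As written, the lemma does not directly yield $K\supseteq N$. The clean fix is to project: let $\rho:G\to N$ be the quotient by the head $\prod_{n<m}G_{p_n}$; then $\rho(K)$ is a closed subdirect product of $N$, so the lemma gives $\rho(K)=N$, i.e.\ $K\cdot\prod_{n<m}G_{p_n}=G$, and $K$ is open. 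This is precisely the paper's route. (If you really want $K\supseteq N$, you must first check that $K\cap N$ still surjects onto each $G_{p_n}$ for $n\ge m$, using that $K/(K\cap N)$ embeds in the head and hence has no composition factor isomorphic to $\ssl(2,2^{p_n})$; then apply the lemma to $K\cap N$.) Either way the argument goes through, but the step ``forces $K\supseteq N$'' needs this intermediate move.
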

\begin{proof}
	Let $H=\langle x, y\rangle$ and, for $n\in \mathbb N,$ set $H_n=\pi_n(G).$ Clearly $H\leq \prod_{n\in \mathbb N}H_n$ and $|G:H|\geq |G:\prod_{n\in \mathbb N}H_n|=
	\prod_{n\in \mathbb N}|G_{p_n}:H_n|.$ If $H$ is open in $G$, then $|G:H|$ is finite and therefore $H_n=G_{p_n}$ for all but finitely many $n.$ Conversely, assume that $H_n=G_{p_n}$ for any $n\leq m$ and set $K=\prod_{n < m}G_{p_n}$. If follows from the Goursat's Lemma (see for example \cite[Theorem 5.5.1]{hall}) that $HK=G,$ and so $H$ is open in $G.$
\end{proof}

Let $V_{\rm{virt}}(G)$ be the set of the non-isolated vertices
of $\Gamma_{\rm{virt}}(G).$ Moreover, given $x=(x_n)_{n\in \mathbb N}\in G$, define $\Lambda(x)=\{n\in \mathbb N\mid x_n\in V(G_{p^n})\}.$ 

\begin{cor}
	$x=(x_n)_{n\in \mathbb N}$ is a non-isolated vertex of  $\Gamma_{\rm{virt}}(G)$  if and only if $n\in \Lambda(x)$ for all but finitely many $n\in \mathbb N.$
\end{cor}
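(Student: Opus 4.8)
The plan is to read the statement off directly from Lemma~\ref{cinque} together with the componentwise description of non-isolated vertices in the finite factors. Recall that $V(G_{p_n})$ is the set of non-isolated vertices of the generating graph $\Gamma(G_{p_n})$, so that $n\in\Lambda(x)$ holds exactly when $x_n$ lies in some generating pair of $G_{p_n}$; and since $G_{p_n}=(\ssl(2,2^{p_n}))^{\delta_{p_n}}$ is non-cyclic, any pair with $\langle x_n,y_n\rangle=G_{p_n}$ automatically has $x_n\neq y_n$. This last remark is what makes ``$x_n$ is a non-isolated vertex of $\Gamma(G_{p_n})$'' and ``$x_n$ belongs to a generating pair of $G_{p_n}$'' equivalent.

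First I would treat the forward direction. Assume $x=(x_n)_{n\in\mathbb N}$ is a non-isolated vertex of $\Gamma_{\rm{virt}}(G)$; then there is $y=(y_n)_{n\in\mathbb N}\in G$ with $\langle x,y\rangle$ open in $G$. By Lemma~\ref{cinque} there is $m\in\mathbb N$ with $\langle x_n,y_n\rangle=G_{p_n}$ for all $n\geq m$, and hence $n\in\Lambda(x)$ for all $n\geq m$, i.e. for all but finitely many $n\in\mathbb N$.

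For the converse, suppose $\mathbb N\setminus\Lambda(x)$ is finite and fix $m$ such that $n\in\Lambda(x)$ whenever $n\geq m$. For each such $n$ pick $y_n\in G_{p_n}$ with $\langle x_n,y_n\rangle=G_{p_n}$, and set $y_n=1$ for $n<m$; then $y=(y_n)_{n\in\mathbb N}\in G$, and moreover $x\neq y$ because $x_m\neq y_m$ (as $G_{p_m}$ is non-cyclic). By Lemma~\ref{cinque}, $\langle x,y\rangle$ is open in $G$, so $\{x,y\}$ is an edge of $\Gamma_{\rm{virt}}(G)$ and $x$ is a non-isolated vertex.

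I do not expect a genuine obstacle here: the argument is essentially a repackaging of Lemma~\ref{cinque}. The only point that needs a word of care is the bookkeeping ensuring that the witness $y$ produced in the converse is really distinct from $x$, so that $\{x,y\}$ is an honest edge rather than a loop; the non-cyclicity of each factor $G_{p_n}$ disposes of this immediately.
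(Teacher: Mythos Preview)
Your argument is correct and is exactly what the paper intends: the corollary is stated without proof, as an immediate consequence of Lemma~\ref{cinque}, and you have simply spelled out that deduction. The only detail you add beyond a one-line appeal to the lemma is the check that the witness $y$ is distinct from $x$ (handled via the non-cyclicity of each $G_{p_n}$), which is routine but worth noting.
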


\begin{lemma}\label{coco}
	Let $x=(x_n)_{n\in \mathbb N}\in V_{\rm{virt}}(G)$ and let $\Omega_x$ be the connected component of $\Delta_{\rm{virt}}(G)$ containing $x.$ Then $y=(y_n)_{n\in \mathbb N}$ belongs to $\Omega_x$ if and only if
	$$\sup_{n\in \Lambda(x)\cap \Lambda(y)}\dist_{\Delta(G_{p_n})}(x_n,y_n)<\infty.$$
\end{lemma}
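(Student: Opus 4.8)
The plan is to translate, via Lemma~\ref{cinque}, between walks in $\Delta_{\rm{virt}}(G)$ and coherent families of walks in the coordinate graphs $\Delta(G_{p_n})$. We may assume $y\in V_{\rm{virt}}(G)$, since otherwise $y$ is not even a vertex of $\Delta_{\rm{virt}}(G)$; then, by the corollary to Lemma~\ref{cinque}, the sets $\Lambda(x)$ and $\Lambda(y)$ are cofinite in $\mathbb N$, so $I:=\Lambda(x)\cap\Lambda(y)$ is cofinite. For $n\in I$ both $x_n$ and $y_n$ are non-isolated vertices of $\Gamma(G_{p_n})$, and since $\Delta(G_{p_n})$ is connected the quantity $d_n:=\dist_{\Delta(G_{p_n})}(x_n,y_n)$ is a well-defined finite number.

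Suppose first that $y\in\Omega_x$, and fix a path $x=z^{(0)},z^{(1)},\dots,z^{(k)}=y$ in $\Delta_{\rm{virt}}(G)$. Applying Lemma~\ref{cinque} to each edge $z^{(j)}\sim z^{(j+1)}$ and taking the largest of the finitely many indices obtained, we find $m$ with $\langle z^{(j)}_n,z^{(j+1)}_n\rangle=G_{p_n}$ for all $j<k$ and all $n\ge m$. For such $n$ the sequence $x_n=z^{(0)}_n,\dots,z^{(k)}_n=y_n$ is a walk of length $k$ in which consecutive terms generate $G_{p_n}$; hence every term is non-isolated and it is a walk in $\Delta(G_{p_n})$, so $d_n\le k$. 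As the remaining $n\in I$ with $n<m$ are finite in number and have $d_n<\infty$, we conclude $\sup_{n\in I}d_n<\infty$.

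Conversely, put $D:=\sup_{n\in I}d_n$ and choose $m$ with $\{n:n\ge m\}\subseteq I$. The crux is the following \emph{uniform-length walk claim}: for every $n\ge m$ there is a walk $x_n=w_n(0),\dots,w_n(L)=y_n$ in $\Delta(G_{p_n})$ of one and the same length $L:=D+2$. Granting it, define $z^{(j)}\in G$ for $0\le j\le L$ by $z^{(j)}_n:=w_n(j)$ when $n\ge m$, and for $n<m$ by $z^{(0)}_n:=x_n$, $z^{(L)}_n:=y_n$ and $z^{(j)}_n:=x_n$ for $0<j<L$. Then $z^{(0)}=x$, $z^{(L)}=y$, and Lemma~\ref{cinque} shows that $z^{(j)}$ and $z^{(j+1)}$ are adjacent in $\Gamma_{\rm{virt}}(G)$: in each coordinate $n\ge m$ we have $\langle w_n(j),w_n(j+1)\rangle=G_{p_n}$ because $w_n(j)\sim w_n(j+1)$ in $\Delta(G_{p_n})$, while the finitely many coordinates $n<m$ impose no condition. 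Every $z^{(j)}$ has a neighbour and so lies in $V_{\rm{virt}}(G)$, whence $x=z^{(0)},\dots,z^{(L)}=y$ is a path in $\Delta_{\rm{virt}}(G)$ and $y\in\Omega_x$.

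It remains to establish the uniform-length walk claim, and this I expect to be the real obstacle. Fix $n\ge m$; since $d_n\le D$ we have $L-d_n\ge 2$, so it suffices to produce, for each integer $\ell\ge 2$, a closed walk of length $\ell$ based at $y_n$ in $\Delta(G_{p_n})$ — concatenating a geodesic from $x_n$ to $y_n$ with one such closed walk of length $L-d_n$ gives a walk of length exactly $L$. Closed walks of even length $\ell\ge 2$ at $y_n$ are immediate (traverse an edge incident to $y_n$ back and forth, such an edge existing because $y_n$ is non-isolated), and those of odd length $\ell\ge 3$ follow once we know $y_n$ lies on a triangle of $\Delta(G_{p_n})$. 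Hence the lemma reduces to the assertion that, for all sufficiently large $n$, every non-isolated vertex of $\Delta(G_{p_n})$ lies on a triangle; equivalently, for $G_{p_n}=\ssl(2,2^{p_n})^{\delta_{p_n}}$ and every element $u$ that is a non-isolated vertex there are $v,w$ with $u,v,w$ pairwise generating $G_{p_n}$. Proving this uniformly in $n$ is a counting problem about the direct power, of the same flavour as the estimates that define $\delta_p$ and yield connectedness of $\Delta(G_{p_n})$: one exploits that $\ssl(2,2^{p_n})$ has spread at least $2$ (so diagonal-type partners are plentiful) and discards the boundedly many diagonal obstructions in the power. If only a weaker statement is available — that each non-isolated vertex lies within bounded distance of an odd cycle of bounded length — the same construction still goes through with $L$ replaced by a constant depending only on $D$.
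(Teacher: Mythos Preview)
Your reduction is sound, but you stop exactly where one line finishes the job. You reduce the converse to the claim that every non-isolated vertex of $\Delta(G_{p_n})$ lies on a triangle, and then call this ``a counting problem about the direct power'' requiring spread estimates for $\ssl(2,2^{p_n})$. It is nothing of the sort: if $u$ is non-isolated, pick any neighbour $v$, so that $\langle u,v\rangle=G_{p_n}$; then
\[
\langle u,\,uv\rangle=\langle v,\,uv\rangle=\langle u,v\rangle=G_{p_n},
\]
and $u,v,uv$ are pairwise distinct because $G_{p_n}$ is not cyclic (so $u,v\neq 1$ and $v\neq u$). Hence $\{u,v,uv\}$ is a triangle through $u$, valid for \emph{every} $n$, and your uniform-length walk claim follows immediately with $L=D+2$ as you wrote.

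This is precisely the device the paper uses. In the parity-odd case the paper inserts the product $x_{n,\mu_n-1}\,x_{n,\mu_n}$ of the last two vertices of the geodesic; this element is adjacent to both of them for the reason above, so one extra step through it converts an odd deficit into an even one, after which backtracking on the last edge pads the walk to length $m$. With the one-line triangle observation added, your argument is complete and coincides with the paper's.
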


\begin{proof}
	Assume that $y=(y_n)_{n\in \mathbb N}\in \Omega_x$ and let $m=\dist_{\Delta(G)}(x,y).$ It follows from Lemma \ref{cinque} that
	$\dist_{\Delta(G_{p_n})}(x_n,y_n)\leq m$  for all but finitely many $n\in  \Lambda(x)\cap \Lambda(y).$
	Conversely assume  $y=(y_n)_{n\in \mathbb N}\in V_{\rm{virt}}(G)$ and $\dist_{\Delta(G_{p_n})}(x_n,y_n)\leq m$  for every $n$ in a cofinite subset $\Lambda$ of  $\Lambda(x)\cap \Lambda(y).$ If $n \in \Lambda$, then there is a path $$x_n=x_{n,0},x_{n,1},\dots,x_{n,\mu_n}=y_n,$$ with $\mu_n\leq m,$ joining $x_n$ and $y_n$ in the graph $\Delta(G_{p_n}).$ For $0\leq i\leq m,$ set
	$$\begin{aligned}
	\tilde x_{n,i}=\begin{cases}x_n &\text{if $n\notin \Lambda$,}\\
	x_{n,i}&\text{if $n\in \Lambda,$ $i < \mu_n$,}\\x_{n,\mu_n}&\text{if $n\in \Lambda,$ $i\geq \mu_n$ and $m-\mu_n$ is even,}\\x_{n,\mu_n-1}x_{n,\mu_n}&\text{if $n\in \Lambda,$ $i\geq \mu_n$ and $m-\mu_n$ is odd.} 
	\end{cases}
	\end{aligned}$$
	Then  $$x=\tilde x_0=(\tilde x_{n,0})_{n\in \mathbb N},\ \tilde x_1=(\tilde x_{n,1})_{n\in \mathbb N},\dots,y=\tilde x_m=(\tilde x_{n,m})_{n\in \mathbb N}
	$$ is a path joining $x$ and $y$ in the graph $\Delta_{\rm{virt}}(G),$ so $y\in \Omega_x.$
\end{proof}

\begin{prop}
	$\Delta_{\rm{virt}}(G)$ has $2^\aleph_0$ different connected components.
\end{prop}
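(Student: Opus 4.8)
The number of connected components of $\Delta_{\rm{virt}}(G)$ is at most $|G|$, and since $G=\prod_{n\in\mathbb N}G_{p_n}$ is a countable product of nontrivial finite groups we have $|G|=2^{\aleph_0}$; so it suffices to produce $2^{\aleph_0}$ vertices of $\Delta_{\rm{virt}}(G)$ lying pairwise in distinct connected components. The plan is to realize these vertices along geodesics of the finite graphs $\Delta(G_{p_n})$ and then to invoke Lemma \ref{coco} as the only substantive ingredient.

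First I would record, for each $n\in\mathbb N$, that $\Delta(G_{p_n})$ is connected of diameter $d_n\geq 2^n$, and fix a geodesic $v_{n,0},v_{n,1},\dots,v_{n,d_n}$ in it; along a geodesic one has $\dist_{\Delta(G_{p_n})}(v_{n,i},v_{n,j})=|i-j|$ for all $0\leq i,j\leq d_n$, and each $v_{n,i}$ is a non‑isolated vertex of $\Gamma(G_{p_n})$, i.e.\ $v_{n,i}\in V(G_{p_n})$. Then, to every real number $r\in[0,1)$ I would associate $x^{(r)}=(x^{(r)}_n)_{n\in\mathbb N}\in G$ with $x^{(r)}_n=v_{n,\lfloor 2^n r\rfloor}$; this is legitimate since $0\leq\lfloor 2^n r\rfloor<2^n\leq d_n$. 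Because every coordinate of $x^{(r)}$ lies in $V(G_{p_n})$, we get $\Lambda(x^{(r)})=\mathbb N$, so each $x^{(r)}$ is a non‑isolated vertex of $\Gamma_{\rm{virt}}(G)$ and hence a vertex of $\Delta_{\rm{virt}}(G)$.

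It remains to check that $x^{(r)}$ and $x^{(s)}$ lie in distinct connected components whenever $r\neq s$. Since $\Lambda(x^{(r)})=\Lambda(x^{(s)})=\mathbb N$, Lemma \ref{coco} reduces this to showing $\sup_{n}\dist_{\Delta(G_{p_n})}(x^{(r)}_n,x^{(s)}_n)=\infty$. Assuming $r<s$ and putting $\varepsilon=s-r>0$, for any $M$ I would choose $n$ with $2^n\varepsilon>M+1$ and compute
$$\dist_{\Delta(G_{p_n})}(x^{(r)}_n,x^{(s)}_n)=\lfloor 2^n s\rfloor-\lfloor 2^n r\rfloor\geq 2^n s-2^n r-1=2^n\varepsilon-1>M.$$
Hence the supremum is infinite, so $x^{(r)}$ and $x^{(s)}$ belong to different connected components. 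As $[0,1)$ has cardinality $2^{\aleph_0}$, the components $\Omega_{x^{(r)}}$ are pairwise distinct, giving $2^{\aleph_0}$ of them, which is also the maximum possible.

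I do not expect a genuine obstacle in carrying this out: the only delicate points are (i) making sure each $x^{(r)}$ is an honest vertex of the reduced graph, which is exactly why I force $\Lambda(x^{(r)})=\mathbb N$ by keeping every coordinate in $V(G_{p_n})$, and (ii) arranging that no uniform bound controls the coordinatewise distances, for which the hypothesis $\diam(\Delta(G_{p_n}))\geq 2^n\to\infty$ is tailor‑made once the coordinates are spread out dyadically along the geodesics. Everything beyond this is bookkeeping.
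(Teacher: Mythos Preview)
Your proof is correct and follows essentially the same strategy as the paper's: both construct continuum-many vertices whose coordinatewise distances in $\Delta(G_{p_n})$ are unbounded and then invoke Lemma \ref{coco}. The only cosmetic difference is the parametrization---you place points along fixed geodesics at index $\lfloor 2^n r\rfloor$ for $r\in[0,1)$, whereas the paper fixes a base vertex $x$ and, for each $\tau>1$, picks $y_{\tau,n}$ at distance $1+\lfloor n/\tau\rfloor$ from $x_n$; your version is arguably tidier since it makes the non-isolation of each $x^{(r)}$ and the upper bound $|G|=2^{\aleph_0}$ explicit.
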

\begin{proof}
	Fix $x=(x_n)_{n\in \mathbb N}\in \Delta_{\rm{virt}}(G).$ Let $\tau$ be
	a  real number with $\tau>1.$ Since $$\diam(\Delta(G_{p_n}))\geq 2^n\geq 1+\lfloor n/\tau \rfloor,$$ for every $n\in \mathbb N$
	there exists $y_{\tau,n}\in G_{p_n}$ such that
	$\dist_{\Delta(G_{p_n})}(x_n,y_{\tau,  n})=1+\lfloor n/\tau \rfloor.$ If $\tau_2>\tau_1,$ then
	$$\dist(y_{\tau_2,n},y_{\tau_1,n})\geq
	\dist(x_n,y_{\tau_1,n})-\dist(x_n,y_{\tau_2,n})=\lfloor n/\tau_1 \rfloor - \lfloor n/\tau_2 \rfloor$$
	tends to infinity with $n,$ so, by Lemma \ref{coco}, $\Omega_{y_{\tau_1}}\neq \Omega_{y_{\tau_2}}.$
\end{proof}

\section{Proof of Theorem \ref{tantec}}\label{esempiodue}
We consider the set $\Omega$ of the elements $(x_1,x_2,\dots,x_{2t-1},x_{2t})\in \mathbb F_2^{2t}$ such that $(x_{2i-1},x_{2i})\neq (0,0)$ for $1\leq i \leq t.$ To any $\omega\in \Omega$ we associate a different odd prime number $p_\omega.$ Let
$H=\langle y_1, y_2,\dots,y_{2t-1},y_{2t}\rangle$ be an elementary abelian 2-group of rank ${2t}.$ For any $\omega=(x_1,x_2,\dots,x_{2t-1},x_{2t})\in \Omega$, we define an action of $H$ on the direct product $N_\omega={(\mathbb Z_{p_\omega})}^3$ of three copies of the group $\mathbb Z_{p_\omega}$ of the $p_\omega$-adic integers as follows:
$$(z_1,z_2,z_3)^{y_i}=(z_1,z_2,(-1)^{x_i}\cdot z_3).$$
We consider the semidirect product 
$$G=\left(\prod_{\omega\in \Omega}N_\omega\right)\rtimes H.$$
Notice that $G$ is a prosoluble group that can be generated by $2t$ elements, as it follows easily from the following lemma.
\begin{lemma}\label{ro1ro2}
Let $\omega=(x_1,x_2,\dots,x_{2t-1},x_{2t}) \in \Omega$ and  $v_{\omega}=(1,0,1),$ $w_\omega=(1,0,-1)\in N_\omega.$ Then
	$N_\omega \leq \cap_{1\leq i \leq t}\langle v_\omega y_{2i-1}, w_\omega y_{2i}\rangle.$
\end{lemma}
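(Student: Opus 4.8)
The plan is to reduce the statement to a single index $i$ and then to find, inside the corresponding $2$-generator subgroup, enough elements of $N_\omega$ to generate it over $\mathbb{Z}_{p_\omega}$. Since $X\le\bigcap_{1\le i\le t}Y_i$ amounts to $X\le Y_i$ for each $i$, it suffices to fix $i$ and prove $N_\omega\le M$, where $M:=\langle a,b\rangle$ with $a:=v_\omega y_{2i-1}$ and $b:=w_\omega y_{2i}$. As $v_\omega,w_\omega\in N_\omega$ and $N_\omega\trianglelefteq G$, the closed subgroup $M$ is contained in $N_\omega\rtimes H$, so $M\cap N_\omega$ is the kernel of the projection $M\to H$. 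Because $N_\omega$ is abelian, for $n\in M\cap N_\omega$ one has $a^{-1}na=n^{y_{2i-1}}$ and $b^{-1}nb=n^{y_{2i}}$; hence $M\cap N_\omega$ is a (closed) $\mathbb{Z}_{p_\omega}$-submodule of $N_\omega$ stable under $y_{2i-1}$ and $y_{2i}$, and it will be enough to produce finitely many elements of $M\cap N_\omega$ whose $\mathbb{Z}_{p_\omega}$-span, after also applying $y_{2i-1}$ and $y_{2i}$, is all of $N_\omega$.

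To produce such elements I would use that $y_{2i-1}^2=y_{2i}^2=1$, so that the $H$-components cancel in $a^2$, in $b^2$ and in $(ab)(ba)^{-1}$, placing all three in $N_\omega$. Writing $N_\omega$ additively, with $y_{2i-1}$ and $y_{2i}$ acting $\mathbb{Z}_{p_\omega}$-linearly, a direct computation in $N_\omega\rtimes\langle y_{2i-1},y_{2i}\rangle$ gives
$$a^2=v_\omega+v_\omega^{y_{2i-1}},\qquad b^2=w_\omega+w_\omega^{y_{2i}},\qquad (ab)(ba)^{-1}=\bigl(v_\omega-v_\omega^{y_{2i}}\bigr)+\bigl(w_\omega^{y_{2i-1}}-w_\omega\bigr).$$
Substituting $v_\omega=(1,0,1)$, $w_\omega=(1,0,-1)$ and the rule $(z_1,z_2,z_3)^{y_j}=(z_1,z_2,(-1)^{x_j}z_3)$, each of these three vectors — and each of its $y_{2i-1}$- and $y_{2i}$-translates — is computed explicitly, its entries being small integer combinations of $1$, $(-1)^{x_{2i-1}}$ and $(-1)^{x_{2i}}$.

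Finally I would carry out the case analysis, which is where both hypotheses are used and which I expect to be the only real point. Since $\omega\in\Omega$, we have $(x_{2i-1},x_{2i})\ne(0,0)$, so $(x_{2i-1},x_{2i})$ is one of $(1,0)$, $(0,1)$, $(1,1)$; in each case one checks by inspection that the vectors above span $N_\omega$ over $\mathbb{Z}_{p_\omega}$. The arithmetic input is exactly that $p_\omega$ is odd: then $2$ and $4$ are units in $\mathbb{Z}_{p_\omega}$, so $\overline{\langle 2v\rangle}=\overline{\langle v\rangle}$ for every $v\in N_\omega$, and a vector whose nonzero entries are all $\pm2$ or $\pm4$ already generates the corresponding coordinate subgroup. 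The delicate case is $(x_{2i-1},x_{2i})=(1,1)$, where $y_{2i-1}$ and $y_{2i}$ act identically on $N_\omega$ and $a^2,b^2$ carry little information by themselves, so that $(ab)(ba)^{-1}$ is genuinely needed. Once $N_\omega\le\langle v_\omega y_{2i-1},w_\omega y_{2i}\rangle$ is established for every $i$, the inclusion into the intersection, and hence Lemma~\ref{ro1ro2}, follows at once.
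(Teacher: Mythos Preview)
Your approach is essentially the paper's: reduce to a single index $i$, compute $a^2$, $b^2$ and a commutator-type element lying in $N_\omega$, and then use that $p_\omega$ is odd so that $2$ and $4$ are units in $\mathbb Z_{p_\omega}$. The paper takes $[\rho_1,\rho_2]=\rho_1^{-1}\rho_2^{-1}\rho_1\rho_2$ where you take $(ab)(ba)^{-1}=aba^{-1}b^{-1}=[a^{-1},b^{-1}]$; these differ only by a sign in the third coordinate and play the same role. Your extra observation that $M\cap N_\omega$ is stable under $y_{2i-1}$ and $y_{2i}$ is correct but turns out not to be needed.

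One caveat, which your sketch glosses over and which the paper's own proof shares. With the stated values $v_\omega=(1,0,1)$ and $w_\omega=(1,0,-1)$, every element you produce has second coordinate $0$: the $H$-action fixes the first two coordinates, and both $v_\omega$ and $w_\omega$ have second entry $0$, so in fact $M\cap N_\omega\le \mathbb Z_{p_\omega}\times\{0\}\times\mathbb Z_{p_\omega}$. Thus your sentence ``one checks by inspection that the vectors above span $N_\omega$'' cannot literally hold, and the paper's concluding line $N_\omega=\langle(1,0,0),(0,1,0),(0,0,1)\rangle\le\langle\rho_1,\rho_2\rangle$ has the same defect. This is evidently a typo in the statement (comparing with how the first two coordinates are used later, the intended choice is $w_\omega=(0,1,-1)$, so that $v_\omega,w_\omega$ project to a basis of the centralised rank-$2$ summand). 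With that correction both your argument and the paper's go through verbatim: $a^2$, $b^2$ and the commutator give $(2,0,\ast)$, $(0,2,\ast)$ and $(0,0,u)$ with $u\in\{\pm 2,4\}$, whence $(1,0,0),(0,1,0),(0,0,1)\in M$.
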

\begin{proof}
	We prove $N_\omega \leq \langle v_\omega y_1, w_\omega y_2 \rangle$ (the other inclusions $N_\omega \leq \langle v_\omega y_{2i-1}, w_\omega y_{2i}\rangle$ can  be proved with a similar argument). Let $\rho_1=v_\omega y_1$ and $\rho_2=w_\omega y_2$. Notice that $$\rho_1^2=(2,0,0^{x_1}\cdot 2),\ \rho_2^2=(2,0,-0^{x_2}\cdot 2),\ [\rho_1,\rho_2]=\begin{cases}(0,0,4) &\text { if }(x_1,x_2)=(1,1)\\(0,0,-2) &\text{otherwise}. \end{cases}$$
	Since $p_\omega\neq 2,$ it follows $N_\omega=\langle (1,0,0), (0,1,0), (0,0,1)\rangle \leq \langle \rho_1, \rho_2\rangle.$
\end{proof}
Now, for $1\leq i \leq t,$ set 
$$\sigma_{2i-1}=\left((v_\omega)_{\omega\in \Omega}\right)y_{2i-1},
\quad \sigma_{2i}=\left((w_\omega)_{\omega\in \Omega}\right)y_{2i}.$$
It follows immediately from Lemma \ref{ro1ro2} that $\langle \sigma_1, \sigma_2\rangle, \dots,\langle\sigma_{2t-1}, \sigma_{2t-2}\rangle$ are open subgroups of $G$ (with index $2^{t-2}$) and therefore $(\sigma_1,\sigma_2),\dots, (\sigma_{2t-1}, \sigma_{2t-2})$ are edges of $\Gamma_{\rm{virt}}(G).$
Now let $$N=\prod_{\omega \in \Omega}N_\omega\ \text { and } \ \Sigma_i=\{nh \in G \mid n\in N \text { and } h \in \langle y_{2i-1}, y_{2i}\rangle\} \text { for $1\leq i \leq t$}.$$
\begin{prop}\label{almeno2}
	Suppose that $g_1$ and $g_2$ are two  adjacent vertices of $\Gamma_{\rm{virt}}(G).$ Then  $\{g_1,g_2\} \subseteq \Sigma_i$ for some $1\leq i \leq t.$ 
\end{prop}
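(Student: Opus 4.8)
The plan is to show that if $g_1 = n_1 h_1$ and $g_2 = n_2 h_2$ with $n_j \in N$ and $h_j \in H$, then the requirement that $\langle g_1, g_2 \rangle$ be open forces both $h_1$ and $h_2$ to lie in a single two-dimensional coordinate subspace $\langle y_{2i-1}, y_{2i}\rangle$ of $H$. The key structural fact is that $G/N \cong H$ is an elementary abelian $2$-group of rank $2t$, so $\langle g_1, g_2 \rangle$ open in $G$ implies $\langle g_1 N, g_2 N \rangle = G/N = H$, whence $\langle h_1, h_2\rangle = H$. Since $H$ has rank $2t$ and is generated by two elements only when $2t \le 2$, this already handles $t=1$; for $t \ge 2$ the condition $\langle h_1, h_2\rangle = H$ is impossible, so I expect the real content to be elsewhere: one must look more carefully at the action of $H$ on the factors $N_\omega$.

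So the first step I would carry out is to analyze, for a fixed $\omega = (x_1,\dots,x_{2t}) \in \Omega$, the image of $\langle g_1, g_2\rangle$ in the quotient $N_\omega \rtimes H$ (projecting away all other $N_{\omega'}$). Write $\bar g_j = \bar n_j h_j$ with $\bar n_j \in N_\omega$. The subgroup $\langle \bar g_1, \bar g_2\rangle$ must equal $N_\omega \rtimes H$ for $\langle g_1,g_2\rangle$ to be open. Now $H$ acts on $N_\omega = \mathbb Z_{p_\omega}^3$ through the character $y_i \mapsto (-1)^{x_i}$ acting on the third coordinate only; the first two coordinates are fixed pointwise. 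Hence the first two coordinates of $N_\omega$ together with $H$ generate an abelian-by-elementary-abelian quotient in which the images of $\bar g_1, \bar g_2$ must still generate everything — in particular their $H$-parts $h_1, h_2$ must generate $H$. This recovers the impossibility for $t \ge 2$ unless we have miscounted; the resolution must be that I have the statement's hypothesis backwards and in fact the claim should be read as: the constraint is not $\langle h_1,h_2\rangle = H$ but rather that openness of $\langle g_1,g_2\rangle$ in $G$ is a much weaker condition because $N$ is the "bulk" and one only needs the image in $G/N$ to have finite index, i.e.\ to be all of $H$ — so indeed $t=1$ is the only case, OR the $\Sigma_i$ are cosets of index-$2^{2t-2}$-type and the real claim is about which coordinates of $h_1,h_2$ can be nonzero.

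Reconsidering: the correct reading is that $\Sigma_i = N\langle y_{2i-1},y_{2i}\rangle$ is a subgroup of index $2^{2t-2}$, and $\{g_1,g_2\}\subseteq \Sigma_i$ means $h_1,h_2 \in \langle y_{2i-1},y_{2i}\rangle$. The proof must therefore show that if $h_1$ or $h_2$ has a nonzero $y_j$-component for some $j \notin \{2i-1,2i\}$ for every $i$, then $\langle g_1,g_2\rangle$ is \emph{not} open. The mechanism: for each $\omega$, write $D_\omega(h) = \sum_i x_i \epsilon_i(h) \bmod 2$ where $\epsilon_i(h)$ is the $y_i$-coordinate of $h$; this is the scalar by which $h$ acts on the third coordinate of $N_\omega$. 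The image of $\langle g_1, g_2\rangle$ in $N_\omega \rtimes H$ contains $N_\omega$ if and only if the two conjugation actions produce enough commutators and squares to hit all three coordinates of $N_\omega$; since the first two coordinates are central in $N_\omega \rtimes H$ and are \emph{not} in the image of the commutator map or of $n \mapsto n^{-1} n^h$, the subgroup $\langle \bar g_1,\bar g_2\rangle$ can contain $N_\omega$ only if it already contains the first two coordinates, which requires $\bar n_1, \bar n_2$ (the $N_\omega$-parts of $g_1,g_2$) to generate the rank-$2$ summand $\mathbb Z_{p_\omega}^2$ as a subgroup — together with the constraint $\langle h_1,h_2\rangle = H$, which fails. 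The honest approach, then, is: assume $\langle g_1,g_2\rangle$ is open; for it to surject onto $G/N=H$ we need $\langle h_1,h_2\rangle=H$; pick any index $j$; since $\langle h_1,h_2\rangle = H$ there is a relation expressing $y_j$, and by choosing $\omega$ appropriately (using that every element of $\mathbb F_2^{2t}$ with nonzero pairs is realized) one forces the third coordinate of the corresponding $N_\omega$ to lie outside the image, contradiction — unless $h_1,h_2$ are supported on a single pair. \textbf{The main obstacle} I anticipate is the bookkeeping with the character $D_\omega$ across the index set $\Omega$: one must choose, for a hypothetical "bad" pair $(h_1,h_2)$, a witness $\omega$ such that the $H$-module $N_\omega$ fails to be covered, and verify this using only the square and commutator computations already recorded in Lemma~\ref{ro1ro2}. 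Once the right $\omega$ is selected, the contradiction should drop out of the same computation $\rho_1^2, \rho_2^2, [\rho_1,\rho_2]$ used there, now run with generic $x_i$.
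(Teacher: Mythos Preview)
Your proposal rests on a recurring error that blocks the argument. You write several times that openness of $\langle g_1,g_2\rangle$ forces $\langle h_1,h_2\rangle=H$, because the image in $G/N$ must have finite index. But $G/N=H$ is \emph{finite}, so every subgroup of $H$ has finite index; openness of $\langle g_1,g_2\rangle$ says nothing about which subgroup of $H$ the pair $h_1,h_2$ generates. The paradox you keep running into (``$t\ge 2$ is impossible'') is a symptom of this mistake, not of a miscount. Once this is dropped, the whole strategy of deriving a contradiction from $\langle h_1,h_2\rangle\neq H$ evaporates.

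The actual obstruction, and the point you have not isolated, is this: one must locate an $\omega\in\Omega$ such that \emph{both} $h_1$ and $h_2$ act trivially on $N_\omega$. Writing $h_1=\prod y_j^{a_j}$, $h_2=\prod y_j^{b_j}$, this means finding $\omega=(x_1,\dots,x_{2t})\in\Omega$ with $\sum a_jx_j=\sum b_jx_j=0$ in $\mathbb F_2$. Once such an $\omega$ exists, the projection of $\langle g_1,g_2\rangle$ to $N_\omega\rtimes H$ lands in $N_\omega\times\langle h_1,h_2\rangle$, an \emph{abelian} group; a $2$-generated abelian group meets $N_\omega\cong\mathbb Z_{p_\omega}^3$ in a subgroup of $\mathbb Z_{p_\omega}$-rank at most $2$, hence of infinite index, contradicting openness. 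The issue is a rank-$3$ versus rank-$2$ mismatch across all of $N_\omega$, not a failure to reach the third coordinate alone.

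What remains---and what your sketch does not address---is the linear algebra over $\mathbb F_2$ needed to show that such an $\omega$ exists whenever $(a_1,\dots,a_{2t})$ and $(b_1,\dots,b_{2t})$ are not both supported on a single coordinate pair $\{2i-1,2i\}$. This requires a genuine case analysis (essentially: look at the $2\times 2$ blocks $C_i=\begin{pmatrix}a_{2i-1}&a_{2i}\\ b_{2i-1}&b_{2i}\end{pmatrix}$; if none is invertible, pick a nonzero kernel vector in each block; if some $C_i$ is invertible but another block is nonzero, use that $\Omega$ spans $\mathbb F_2^{2t}$ to build a solution). Your proposal gestures at ``choosing $\omega$ appropriately'' but supplies neither the criterion nor the construction.
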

\begin{proof}
	Suppose that $g_1=n_1h_1,$ $g_2=n_2h_2$ with $n_1, n_2 \in N,$ $h_1=\prod_{1\leq j\leq 2t}y_j^{a_j}$ and $h_2=\prod_{1\leq j\leq 2t}y_j^{b_j}$.
	We claim that the system 
	\begin{equation}\label{sistema}
	\begin{cases}
	a_1x_1+\dots+a_{2t}x_{2t}\!&=\ 0\\
	b_1x_1+\dots+b_{2t}x_{2t}\!&=\ 0
	\end{cases}
	\end{equation}
	where the coefficients are viewed as elements of $\mathbb F_2$, has no solution in $\Omega.$
	Indeed if $\omega=(x_1,\dots,x_{2t})$ is a solution of the previous system, then $h_1$ and $h_2$ centralize $N_{\omega}$ and
	consequently $\langle g_1, g_2 \rangle\cap N_\omega=\langle n_1, n_2\rangle$ is a 2-generated subgroup of $N_\omega={(\mathbb Z_{p_\omega})}^3.$ But this would imply  $$|G:\langle g_1,g_2\rangle|\geq |N_\omega \langle g_1,g_2\rangle : \langle g_1,g_2\rangle|=|N_\omega:N_\omega \cap \langle g_1,g_2\rangle|=\infty$$ and therefore $\langle g_1,g_2\rangle$
	is not an open subgroup of $G$. 
	
	For $1\leq i\leq t$, consider the following matrices, with coefficients in $\mathbb F_2:$
	$$C_i=\begin{pmatrix}a_{2i-1}&a_{2i}\\b_{2i-1 }&b_{2i}\end{pmatrix},	\quad D_i:=\begin{pmatrix}a_1&\dots&a_{2i-2}&a_{2i+1}&\dots &a_{2t}\\
	b_1&\dots&b_{2i-2}&b_{2i+1}&\dots &b_{2t}\end{pmatrix}.$$
	Our statement is equivalent to stating that if $C_i\neq 0$, then $D_i=0$.
	First we prove that if $C_i$ is invertible, then $D_i=0.$
Let $\Omega^*$ be the set of the elements $(r_1,r_2,\dots,r_{t-3},r_{t-2})\in \mathbb F_2^{2(t-1)}$ such that $(r_{2i-1},r_{2i})\neq (0,0)$ for $1\leq i \leq t-1.$ 
Assume, by contradiction,  $D_i\neq 0.$ Since $\langle \Omega^*\rangle = \mathbb F_2^{2t-2},$ there exists an element $(x_1,\dots,x_{2i-2},x_{2i+1}\dots,x_{2t})\in \Omega^*$
such that
$$D_i\begin{pmatrix}x_1\\\vdots\\x_{2i-2}\\x_{2i+1}\\\vdots\\ x_{2t}\end{pmatrix}=\begin{pmatrix}z_1\\z_2\end{pmatrix}\neq \begin{pmatrix}0\\0\end{pmatrix}.$$ If $C_i$ is invertible, then there exists $(x_{2i-1},x_{2i})\neq (0,0)$ such that
	$$C\begin{pmatrix}x_{2i-1}\\x_{2i}\end{pmatrix}=\begin{pmatrix}z_1\\z_2\end{pmatrix}$$
	and $(x_1,\dots,x_{2t})\in \Omega$ is a solution of (\ref{sistema}). We remain with the case where none of the matrices $C_1,\dots,C_t$ is invertible. However in this case,
for any $1\leq i \leq t$, there exists $(x_{2i-1},x_{2i})\neq (0,0)$	 such that
	$$C_i\begin{pmatrix}x_{2i-1}\\x_{2i}\end{pmatrix}=\begin{pmatrix}0\\0\end{pmatrix},$$ and again
	$(x_1,\dots,x_{2t})\in \Omega$ is a solution of (\ref{sistema}).
	\end{proof}
For $1\leq i \leq t,$ let $\Gamma_i$ be the set of the non-isolated vertices of $\Gamma_{\rm{virt}}(G)$ contained in $\Sigma_i.$ By the previous lemma, $\Gamma_i$ is a disjoint union of connected components of $\Gamma_{\rm{virt}}(G)$. We are going to prove that indeed $\Gamma_i$ is a connected components. 
\begin{lemma}\label{condi}
Let $g=\left((n_w)_{\omega\in \Omega})\right)h\in \Sigma_i,$ with $h\in H$ and $n_\omega=(z_{1,\omega},z_{2,\omega},z_{3,\omega})\in N_\omega.$ If $g$ is a non-isolated vertex of $\Gamma_{\rm{inv}}(G),$ then following conditions hold:
\begin{enumerate}
	\item $h \neq 1;$
	\item $(z_{1,\omega},z_{2,\omega})\neq (0,0)$ for any $\omega \in \Omega;$
	\item if $h$ centralized $N_\omega,$ then $z_{3,\omega}\neq 0.$
\end{enumerate}
\end{lemma}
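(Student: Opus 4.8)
The plan is to show that each of the three conditions is forced by the existence of a neighbour $g' = n'h'$ of $g$ in $\Gamma_{\rm{virt}}(G)$, i.e.\ by the openness of $\langle g, g'\rangle$. First I would set up notation: write $g = n h$ with $n = (n_\omega)_{\omega\in\Omega} \in N$ and $h \in \langle y_{2i-1}, y_{2i}\rangle$ (using $g \in \Sigma_i$), and similarly $g' = n'h'$ for its neighbour. Since $N$ is normal and $G/N \cong H$ is abelian, $\langle g, g'\rangle N / N = \langle hN, h'N\rangle$; for $\langle g, g'\rangle$ to be open we need $\langle h, h'\rangle N = HN$, i.e.\ $\langle h, h'\rangle = H$. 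This already gives a first constraint, but the real leverage comes from looking inside each factor $N_\omega$.

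For condition (1): if $h = 1$, then $g = n \in N$. For any neighbour $g' = n'h'$, the subgroup $\langle g, g'\rangle$ is contained in $N\langle h'\rangle$, which has infinite index in $G$ whenever $h' \neq H$; and if we needed $\langle h, h'\rangle = H$ we'd need $h' \neq 1$ to generate a rank-$2$ group, impossible from a single element. Hence $h = 1$ forces $g$ to be isolated, proving (1). For conditions (2) and (3) I would use the same device that appears in the proof of Proposition \ref{almeno2}: for a fixed $\omega$, the intersection $\langle g, g'\rangle \cap N_\omega$ must be \emph{all} of $N_\omega \cong (\mathbb{Z}_{p_\omega})^3$ — otherwise $|N_\omega : N_\omega \cap \langle g, g'\rangle| = \infty$ and $\langle g, g'\rangle$ is not open. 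Now $N_\omega$ has rank $3$, and $\langle g, g'\rangle \cap N_\omega$ is generated by the $N_\omega$-components of elements of $\langle g, g'\rangle$. The $H$-action on $N_\omega$ fixes the first two coordinates $(z_1, z_2)$ pointwise and acts by $\pm 1$ on the third; so any word in $g, g'$ contributes to the first two coordinates of $N_\omega$ only through $\mathbb{Z}_{p_\omega}$-linear combinations of $(z_{1,\omega}, z_{2,\omega})$ and $(z'_{1,\omega}, z'_{2,\omega})$, the $N_\omega$-components of $g$ and $g'$. For the projection of $\langle g,g'\rangle \cap N_\omega$ onto the first two coordinates to be all of $(\mathbb{Z}_{p_\omega})^2$, these two vectors must span, so in particular $(z_{1,\omega}, z_{2,\omega}) \neq (0,0)$ — this is condition (2). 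For condition (3): if $h$ centralizes $N_\omega$ (equivalently the exponent sum of $y_{2i-1}, y_{2i}$ in $h$ hitting the sign is even, i.e.\ $(-1)^{(\cdot)} = 1$ on the relevant generators for the index $\omega$), then conjugation by $h$ does nothing to $N_\omega$, so the third coordinate of elements of $\langle g, g'\rangle \cap N_\omega$ can only be built from $z_{3,\omega}$, $z'_{3,\omega}$ and commutators — but commutators $[g, g']$ restricted to $N_\omega$, when $h$ (and by a parallel reduction $h'$) centralizes $N_\omega$, collapse; more carefully, one checks that if $h$ centralizes $N_\omega$ and $z_{3,\omega} = 0$ then every element of $\langle g, g'\rangle$ has third $N_\omega$-coordinate a multiple of $z'_{3,\omega}$ alone, giving a cyclic, hence proper, intersection with the $(\mathbb{Z}_{p_\omega})$-line in the third coordinate — contradiction. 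Hence $z_{3,\omega} \neq 0$, which is (3).

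The main obstacle I anticipate is condition (3), specifically controlling the third-coordinate contributions of commutators and longer words precisely when $h$ centralizes $N_\omega$ but $h'$ possibly does not. The clean way to handle it is to observe: if $h$ centralizes $N_\omega$ but $h'$ does not (so $h'$ inverts the third coordinate), then replacing $g'$ by $g'^2$ or by $[g,g']$ produces an element whose $H$-part centralizes $N_\omega$, and iterating shows the reachable third-coordinate values lie in the subgroup of $\mathbb{Z}_{p_\omega}$ generated by $z_{3,\omega}$ and $z'_{3,\omega} \cdot (\text{something})$ together with the images under the finitely many relevant automorphisms; since everything is happening inside the rank-$1$ $\mathbb{Z}_{p_\omega}$-module $\mathbb{Z}_{p_\omega}$ (third coordinate), and $p_\omega$ is odd so $2$ is a unit, one reduces to: the third coordinate of $\langle g, g'\rangle \cap N_\omega$ is the $\mathbb{Z}_{p_\omega}$-span of $z_{3,\omega}$ and $z'_{3,\omega}$ (the $\pm$ signs being units). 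For this span to be all of $\mathbb{Z}_{p_\omega}$ we need at least one of $z_{3,\omega}, z'_{3,\omega}$ to be a unit, but when $h$ centralizes $N_\omega$ an additional parity/support argument (exactly as in Lemma \ref{ro1ro2}, where the commutator is what injects the third coordinate) shows $z'_{3,\omega}$ alone cannot suffice unless $z_{3,\omega} \neq 0$. I would package this as a short computation analogous to the displayed commutator identities in the proof of Lemma \ref{ro1ro2}.
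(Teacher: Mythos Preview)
Your overall framework---test openness of $\langle g,g'\rangle$ by projecting into each $N_\omega$---is the right one, and your treatment of condition~(2) essentially matches the paper's (project to the first two coordinates of $N_\omega$; a finite-index subgroup of $(\mathbb Z_{p_\omega})^2$ cannot arise if one of the two generating vectors vanishes). But conditions~(1) and~(3) both contain genuine errors.

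\textbf{Condition (1).} Your claim that $N\langle h'\rangle$ has infinite index in $G$ is false: $|G:N|=|H|=2^{2t}$, so $N$ is open and $N\langle h'\rangle$ always has finite index. The preceding assertion that openness of $\langle g,g'\rangle$ forces $\langle h,h'\rangle=H$ is also false: $H$ has rank $2t$, while $\langle h,h'\rangle$ has order at most $4$ for \emph{any} pair $h,h'$, yet adjacent pairs certainly exist. What is actually needed is the sharper conclusion hidden in the proof of Proposition~\ref{almeno2}: adjacency forces some block $C_i$ to be \emph{invertible} over $\mathbb F_2$, equivalently $\langle h,h'\rangle=\langle y_{2i-1},y_{2i}\rangle$; this immediately gives $h\neq 1$. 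The paper simply invokes this (``by the previous lemma and its proof, $K=\langle h,h^*\rangle=\langle y_1,y_2\rangle$'') and uses it as standing input for the rest of the argument.

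\textbf{Condition (3).} The step ``cyclic, hence proper'' fails in $\mathbb Z_{p_\omega}$, which is itself procyclic: if $z'_{3,\omega}$ happened to be a unit, $\langle z'_{3,\omega}\rangle$ would be everything. Your fallback claim in the last paragraph, that the third-coordinate part of $\langle g,g'\rangle\cap N_\omega$ equals the $\mathbb Z_{p_\omega}$-span of $z_{3,\omega}$ and $z'_{3,\omega}$, is also incorrect in the relevant case. The clean mechanism, which your sketch circles but never lands on, is this: pass to the quotient $B_\omega\rtimes K$, where $B_\omega\cong\mathbb Z_{p_\omega}$ is the third coordinate and $K=\langle h,h'\rangle=\langle y_{2i-1},y_{2i}\rangle$ (from Proposition~\ref{almeno2}). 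By the very definition of $\Omega$, $K$ acts nontrivially on $B_\omega$; since $h$ centralises $B_\omega$ by hypothesis, $h'$ must \emph{invert} it. Then the image of $g'$ in $B_\omega\rtimes K$ is $(z'_{3,\omega},h')$, an involution; and if $z_{3,\omega}=0$, the image of $g$ is $(0,h)=h$, another involution. Because $h$ centralises $B_\omega$ and $K$ is abelian, these two involutions commute, so they generate a group of order at most $4$---certainly not of finite index in $\mathbb Z_{p_\omega}\rtimes K$. This is the paper's argument; the deduction that $h'$ \emph{inverts} (rather than ``possibly does not centralise'') is exactly what makes the computation collapse, and it is missing from your outline.
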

\begin{proof}We may assume $i=1.$
 Suppose that there exists $g^*\!=\!\left((n_{\omega}^*)_{\omega \in  \Omega})\right)h^*$ such that $X:=\langle g, g^* \rangle$ is an open subgroup of $G$. By the previous lemma and its proof, $K:=\langle h, h^*\rangle=\langle a_1,a_2\rangle.$ Let $$A_{\omega}:=\{(z_1,z_2,0)\mid z_1,z_2 \in \mathbb Z_p\}\leq N_\omega, \quad B_{\omega}:=\{(0,0,z_3)\mid z_3 \in \mathbb Z_p\}\leq N_\omega.$$ Since $X$ is open in $G$, $(X\cap N_\omega)B_\omega/B_\omega =\langle (z_{1,\omega},z_{2,\omega},0)B_\omega, (z_{1,\omega}^*,z_{2,\omega}^*,0)B_\omega\rangle$ has finite index
	in $N_\omega/B_\omega\cong (\mathbb Z_{p_\omega})^2$ and this implies
	$(z_{1,\omega},z_{2,\omega})\neq (0,0).$ Finally assume 
	$[h,N_\omega]=0$ and set $M_\omega=\left(\prod_{\tilde \omega \neq \omega}N_{\tilde\omega}\right)\times A_\omega.$ Consider $G_\omega:=G/M_\omega.$ We have that $XM_\omega/M_\omega$ is an open subgroup of $G_\omega$ and this implies that
	$\langle (0,0,z_{3,\omega})h, (0,0,z_{3,\omega}^*)h^*\rangle$ has finite index in $B_\omega K \cong\mathbb Z_{p_\omega}\rtimes K.$
	By definition, $[K,B_\omega]\neq 0.$ Since $[h,B_\omega]=0$ and $K=\langle h, h^*\rangle,$ we deduce $[h^*,B_\omega]\neq 0.$ This implies in particular $((0,0,z_{3,\omega})h)^2=1$.
	If $z_{3,\omega}=0,$ then $(0,0,z_{3,\omega})h, (0,0,z_{3,\omega}^*)h^*$
	are two commuting involutions and do not generated a subgroup of finite index of $B_\omega K.$
\end{proof}

\begin{lemma}\label{tante}
	If $g\in \Sigma_i$ satisfies condition (1), (2), (3) of Lemma \ref{isolati}, then $g$ is a non-isolated vertex of $\Gamma_{\rm{inv}}(G)$. In particular if $g_1$ and $g_2$ are two elements of $\Sigma_i$ satisfying (1), (2), (3), then there exists $\tilde g$ in $\Sigma_i$, adjacent to both $g_1$ and $g_2$ in $\Gamma_{\rm{inv}}(G)$.
\end{lemma}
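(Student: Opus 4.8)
Since a common neighbour of $g$ with itself is just a neighbour of $g$, it suffices to prove the ``in particular'' assertion; the first statement is the special case $g_1=g_2=g$. We may assume $i=1$ and write $g_j=\bigl((n_{j,\omega})_{\omega\in\Omega}\bigr)h_j$ with $h_j\in\langle y_1,y_2\rangle$ and $n_{j,\omega}=(z^{(j)}_{1,\omega},z^{(j)}_{2,\omega},z^{(j)}_{3,\omega})\in N_\omega$, for $j=1,2$. The plan is to build a common neighbour of the shape $\tilde g=\bigl((m_\omega)_{\omega\in\Omega}\bigr)\tilde h$. Since $h_1,h_2$ are nontrivial by condition (1) of Lemma \ref{condi} and $\langle y_1,y_2\rangle$ has exactly three nontrivial elements, I would first fix a nontrivial $\tilde h\in\langle y_1,y_2\rangle$ distinct from $h_1$ and from $h_2$, so that $\langle h_j,\tilde h\rangle=\langle y_1,y_2\rangle$ for $j=1,2$; this already guarantees $\tilde g\in\Sigma_1$ and $\tilde g\neq g_1,g_2$ (the $H$-components differ), and it remains to choose the $m_\omega\in N_\omega$ so that $\langle g_1,\tilde g\rangle$ and $\langle g_2,\tilde g\rangle$ are open.

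The second step is a reduction to the factors $N_\omega$. Since $\Omega$ is finite and $N=\prod_\omega N_\omega$ is a direct product of pro-$p_\omega$ groups over pairwise distinct odd primes, a closed subgroup of $G$ whose intersection with each $N_\omega\cong(\mathbb Z_{p_\omega})^3$ is open in $N_\omega$ contains an open subgroup of $N$, hence is open in $G$; so I only need to make $\langle g_j,\tilde g\rangle\cap N_\omega$ open for every $\omega$ and $j$. Here I would use that $g_j^2,\tilde g^2,[g_j,\tilde g]$ all lie in $N$ (the $H$-parts square and commute to $1$), that conjugation by $g_j$ and $\tilde g$ acts on $N$ exactly as $h_j$ and $\tilde h$, and that the closed cyclic subgroup of $N$ generated by $(x_\omega)_\omega$ is $\prod_\omega\overline{\langle x_\omega\rangle}$ and so contains $\overline{\langle x_\omega\rangle}$ inside the factor $N_\omega$. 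It follows that $\langle g_j,\tilde g\rangle\cap N_\omega$ contains the closed subgroup generated by the $\omega$-components of $g_j^2,\tilde g^2,[g_j,\tilde g]$ and by their conjugates under $\langle h_j,\tilde h\rangle$. Writing $\epsilon,\delta\in\{0,1\}$ for the exponents by which $h_j,\tilde h$ act on the third coordinate of $N_\omega$ (as $(-1)^{\epsilon}$, $(-1)^{\delta}$), one notes $(\epsilon,\delta)\neq(0,0)$, since $\langle h_j,\tilde h\rangle=\langle y_1,y_2\rangle$ cannot centralise $N_\omega$ ($\omega\in\Omega$ forces $(x_1,x_2)\neq(0,0)$).

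The third step is the computation. The $\omega$-components of $g_j^2$ and $\tilde g^2$ are $\bigl(2z^{(j)}_{1,\omega},2z^{(j)}_{2,\omega},(1+(-1)^{\epsilon})z^{(j)}_{3,\omega}\bigr)$ and $\bigl(2m_{1,\omega},2m_{2,\omega},(1+(-1)^{\delta})m_{3,\omega}\bigr)$, while that of $[g_j,\tilde g]$ has first two coordinates $0$. As $2$ is a unit in $\mathbb Z_{p_\omega}$, a short calculation splitting into the cases $(\epsilon,\delta)\in\{(1,0),(1,1),(0,1)\}$ shows that these elements together with their $\langle h_j,\tilde h\rangle$-conjugates generate a submodule of $N_\omega$ containing the $\mathbb Z_{p_\omega}$-span of $(z^{(j)}_{1,\omega},z^{(j)}_{2,\omega},0)$, $(m_{1,\omega},m_{2,\omega},0)$ and $(0,0,\lambda_j)$, where $\lambda_j$ equals $m_{3,\omega}$, $z^{(j)}_{3,\omega}-m_{3,\omega}$ and $z^{(j)}_{3,\omega}$ respectively. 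This span is open in $N_\omega$ precisely when $z^{(j)}_{1,\omega}m_{2,\omega}\neq z^{(j)}_{2,\omega}m_{1,\omega}$ and $\lambda_j\neq0$. In the case $(\epsilon,\delta)=(0,1)$, $h_j$ centralises $N_\omega$, so $\lambda_j=z^{(j)}_{3,\omega}\neq0$ comes for free from condition (3) of Lemma \ref{condi}; in the other two cases $\lambda_j\neq0$ excludes a single value of $m_{3,\omega}$, and by condition (2) the inequality $z^{(j)}_{1,\omega}m_{2,\omega}\neq z^{(j)}_{2,\omega}m_{1,\omega}$ excludes a proper subgroup of $(\mathbb Z_{p_\omega})^2$. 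Over $j\in\{1,2\}$ these are finitely many restrictions, so (as $\mathbb Z_{p_\omega}$ is infinite and no group is a union of two proper subgroups) a valid $m_\omega$ exists; choosing one for every $\omega\in\Omega$ makes $\langle g_1,\tilde g\rangle$ and $\langle g_2,\tilde g\rangle$ open in $G$, so $\tilde g\in\Sigma_1$ is adjacent to both $g_1$ and $g_2$ in $\Gamma_{\rm{virt}}(G)$.

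The hard part will be the bookkeeping in the third step: carrying out the explicit computation of the two squares, the commutator, and all their $\langle h_j,\tilde h\rangle$-conjugates inside $N_\omega$ in each of the three cases, and checking that the openness (rank-$3$) condition reduces exactly to the stated inequalities on $m_\omega$ — in particular seeing that condition (3) of Lemma \ref{condi} is precisely what rescues the case $(\epsilon,\delta)=(0,1)$, where $h_j$ contributes nothing to the third coordinate. The surrounding steps (reduction to the coprime factors $N_\omega$, the structure of closed cyclic subgroups of a product over distinct primes, and the final avoidance-of-finitely-many-subgroups argument) should be routine.
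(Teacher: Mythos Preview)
Your plan is correct and follows essentially the same route as the paper: pick $\tilde h\in\langle y_1,y_2\rangle\setminus\{1,h_1,h_2\}$, reduce openness to each factor $N_\omega$, and then use the $\omega$-components of $g_j^2$, $\tilde g^2$, $[g_j,\tilde g]$ to force a finite-index subgroup of $N_\omega$, with condition~(3) rescuing exactly the case where $h_j$ centralises $N_\omega$. The only cosmetic differences are that the paper parametrises the cases by whether $h_j$ (rather than the pair $(h_j,\tilde h)$) centralises $N_\omega$ and does not invoke the extra $\langle h_j,\tilde h\rangle$-conjugates you mention; in fact the squares and the single commutator already suffice, so that addition is harmless but unnecessary.
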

\begin{proof}
We may assume $i=1.$ Let $g_1=\left((n_{1,\omega})_{\omega\in \Omega}\right)h_1$ and $g_2=\left((n_{2,\omega})_{\omega\in \Omega}\right)h_2,$ 
	with $h_1, h_2\in K=\langle a_1,a_2\rangle.$ Choose $\tilde h \in K \setminus \{1,h_1,h_2\}.$ We want to construct $\tilde n_\omega$ in $N_\omega$ such that $
	\left((\tilde n_{\omega})_{\omega\in \Omega}\right)\tilde h$ is  adjacent to both $g_1$ and $g_2$ in $\Gamma_{\rm{inv}}(G)$. It suffices to choose, for any $\omega \in \Omega,$ $\tilde n_\omega$ in $N_\omega$ with the property that $\langle n_{1,\omega}h_1,\tilde n_{\omega}\tilde h\rangle \cap N_\omega$ and $\langle n_{2,\omega}\tilde h_2,\tilde n_{\omega}\tilde h\rangle \cap N_\omega$ have finite index in $N_\omega.$ Suppose $n_{1,\omega}=(z_{11},z_{12},z_{13})$ and $n_{2,\omega}=(z_{21},z_{22},z_{23}).$ We may choose
	$(\tilde z_1, \tilde z_2) \in (\mathbb Z_{p_\omega})^2$ with the property that $\langle (\tilde z_1, \tilde z_2),
(z_{11},z_{12})\rangle$	and $\langle (\tilde z_1, \tilde z_2),
(z_{21},z_{22})\rangle$ have finite index in $(\mathbb Z_{p_\omega})^2.$ For $j\in \{1,2\}$, set $\eta_j=0$ if
$[h_j, N_\omega]=0$ $\eta_j=1$ otherwise. Similarly set $\tilde \eta=0$ if $[\tilde h, N_\omega]=0,$ $\tilde \eta_j=0$ otherwise. Choose $\tilde z_3 \in \mathbb Z_{p_\omega}$ such that $\tilde z_3 \notin \{\tilde \eta(-1)^{\tilde \eta+1}z_{13}, \tilde \eta (-1)^{\tilde \eta+1}z_{23}\}.$ We claim that
$\tilde n_\omega = (\tilde z_1, \tilde z_2, \tilde z_3)$ satisfies the requested property. More precisely, for $j\in \{1,2\}$, the subgroup $\langle (n_{j,\omega}h_j)^2,(\tilde n_{\omega}\tilde h)^2, [n_{1,\omega}h_j,\tilde n_{\omega}\tilde h] \rangle$ has finite index in $N_\omega.$
Indeed we have the following two possibilities:

\noindent 1) $\eta_j=0.$ In this case, by hypothesis, $z_{j3}\neq 0.$ Moreover, since $K=\langle h_j, \tilde h\rangle$, we must have $[N_\omega,\tilde h]\neq 0.$ So
$$\begin{aligned}\langle (n_{j,\omega}h_j)^2,(\tilde n_{\omega}\tilde h)^2, [n_{j,\omega}h_j,\tilde n_{\omega}\tilde h]\rangle=&
\langle (2z_{j1},2z_{j2},2z_{j3}),(2\tilde z_1,2\tilde z_2,0),
(0,0,-2z_{j3} \rangle\\
=&
\langle (z_{j1},z_{j2},0),(\tilde z_1,\tilde z_2,0),
(0,0,z_{j3}) \rangle
\end{aligned}$$
has finite index in $N_\omega.$	

\noindent 2) $\eta_j\neq 0.$ In this case
$$\begin{aligned}&\langle (n_{j,\omega}h_j)^2,(\tilde n_{\omega}\tilde h)^2, [n_{j,\omega}h_1,\tilde n_{\omega}\tilde h]\rangle\\&=
\langle (2z_{j1},2z_{j2},0),(2\tilde z_1,2\tilde z_2,2(1-\tilde \eta)\tilde z_3),
(0,0,2\tilde \eta z_{j3}+(-1)^{\tilde \eta}2\tilde z_3) \rangle
\\&=
\langle (z_{j1},z_{j2},0),(\tilde z_1,\tilde z_2,(1-\tilde \eta)\tilde z_3),
(0,0,\tilde \eta z_{j3}+(-1)^{\tilde \eta}\tilde z_3) \rangle
\end{aligned}$$
has finite index in $N_\omega$ since $\tilde \eta z_{j3}+(-1)^{\tilde \eta}\tilde z_3\neq 0.$
\end{proof}
Combining Proposition \ref{almeno2}, Lemma \ref{condi} and Lemma \ref{tante} we reach the following conclusion.
\begin{prop}\label{isolati}$\Lambda_1,\dots,\Lambda_t$ are the connected components of the graph $\Delta_{\rm{virt}}(G)$.
\end{prop}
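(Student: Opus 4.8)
The goal is to show that $\Lambda_1,\dots,\Lambda_t$ (which I take to be the sets of non-isolated vertices of $\Gamma_{\rm{virt}}(G)$ contained in $\Sigma_1,\dots,\Sigma_t$ respectively, earlier written $\Gamma_i$) are precisely the connected components of $\Delta_{\rm{virt}}(G)$. This splits into three assertions: (a) every non-isolated vertex lies in some $\Lambda_i$; (b) distinct $\Lambda_i$ are unions of connected components, i.e. no edge of $\Gamma_{\rm{virt}}(G)$ joins a vertex of $\Sigma_i$ to a vertex outside $\Sigma_i$ (in particular to $\Sigma_j$, $j\neq i$); and (c) each $\Lambda_i$ is itself connected, indeed of diameter at most $2$.

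\textbf{Plan of the argument.} First I would invoke Proposition~\ref{almeno2}: any edge $\{g_1,g_2\}$ of $\Gamma_{\rm{virt}}(G)$ has both endpoints in a common $\Sigma_i$. This immediately gives (a) and (b): a non-isolated vertex has at least one neighbour, hence lies in some $\Sigma_i$ together with that neighbour, so it lies in $\Gamma_i=\Lambda_i$; and since any edge stays inside a single $\Sigma_i$, a path from a vertex of $\Sigma_i$ can never leave $\Sigma_i$, so $\Lambda_i$ is a union of connected components and the $\Lambda_i$ are pairwise disjoint (their pairwise intersections $\Sigma_i\cap\Sigma_j$ consist of elements $nh$ with $h\in\langle y_{2i-1},y_{2i}\rangle\cap\langle y_{2j-1},y_{2j}\rangle=1$, i.e. $h=1$, and Lemma~\ref{condi}(1) shows such elements are isolated). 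Next, for connectivity of $\Lambda_i$, I combine Lemma~\ref{condi} and Lemma~\ref{tante}: by Lemma~\ref{condi} every vertex $g\in\Lambda_i$ (being non-isolated) satisfies conditions (1),(2),(3); by Lemma~\ref{tante}, given any two such $g_1,g_2\in\Sigma_i$ there is a common neighbour $\tilde g\in\Sigma_i$. Hence any two vertices of $\Lambda_i$ are at distance at most $2$, so $\Lambda_i$ is connected. Finally I would note that each $\Lambda_i$ is nonempty — for instance the edge $(\sigma_{2i-1},\sigma_{2i})$ constructed via Lemma~\ref{ro1ro2} has both endpoints in $\Sigma_i$ and they are non-isolated — so we genuinely get $t$ components, completing the proof of Theorem~\ref{tantec} (with this choice of $\Omega$ the primes $p_\omega$ are all distinct and $G$ is $2t$-generated prosoluble, as already observed).

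\textbf{Where the work really sits.} The substance of the theorem is entirely in the three lemmas already available: Proposition~\ref{almeno2} (the linear-algebra argument over $\mathbb F_2$ forcing the $h$-parts into a single coordinate block $\langle y_{2i-1},y_{2i}\rangle$), Lemma~\ref{condi} (the necessary conditions on a non-isolated vertex, using that $B_\omega K\cong \mathbb Z_{p_\omega}\rtimes K$ and two commuting involutions generate only a finite subgroup), and Lemma~\ref{tante} (the explicit construction of a common neighbour coordinate-by-coordinate in each $N_\omega$). Given these, the Proposition is a short bookkeeping argument. The only mild subtlety I would be careful about is the consistent identification of $\Lambda_i$ with $\Gamma_i$ and the verification that the $\Sigma_i$ meet only in isolated vertices, so that the components are honestly distinct; everything else is a direct citation of the preceding results.
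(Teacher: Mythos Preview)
Your proposal is correct and follows exactly the paper's approach: the paper's own proof is the single sentence ``Combining Proposition~\ref{almeno2}, Lemma~\ref{condi} and Lemma~\ref{tante} we reach the following conclusion,'' and you have simply unpacked this bookkeeping (including the check that the $\Sigma_i\cap\Sigma_j$ contain only isolated vertices and that each $\Lambda_i$ is nonempty), while also correctly noting the $\Gamma_i/\Lambda_i$ notational slip.
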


\section{Minimal generating sets}\label{tacchino}

Let $G=\langle x_1.\dots,x_d\rangle$ be a finitely generated profinite group. If $Y$ is a minimal generating set of $G,$ then 
$Y$ is finite. Indeed, for any $1\leq i\leq d,$ there exists a finite subset $Y_i$ of $Y$ such that $x_i \in \langle Y_i \rangle.$ But then $G=\langle x_1.\dots,x_d\rangle \leq \langle Y_1,\dots,Y_d\rangle$ so $Y=Y_1\cup \dots \cup Y_d$ is finite.
Let $m(G)=\sup\{|Y| \mid Y \text{ is a minimal generating set of }G\}.$ By \cite[Theorem 5]{min},  $m(G)$ is finite
if and only if $G$ contains only finitely many maximal subgroups,
i.e. if and only if the Frattini subgroup of $G$ has finite index in $G.$


\begin{thm} Let $G$ be a finitely generated profinite group. For every positive integer $k$ with $d(G)\leq k\leq m(G),$ there exists a minimal generating set of $G$ of size $k.$
\end{thm}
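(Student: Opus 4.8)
The plan is to reduce the profinite statement to the known finite case (the Tarski irredundant basis theorem, quoted from \cite[Theorem 4.4]{bs}) by passing to suitable finite quotients. Fix $k$ with $d(G)\le k\le m(G)$. Since $k\le m(G)$, by definition there is a minimal generating set $Y=\{y_1,\dots,y_s\}$ of $G$ with $s\ge k$; note $Y$ is finite by the observation opening Section~\ref{tacchino}. Choose an open normal subgroup $N\trianglelefteq G$ small enough that the images $\bar y_1,\dots,\bar y_s$ are pairwise distinct in $\bar G:=G/N$ and still form a \emph{minimal} generating set of $\bar G$: this is possible because for each $i$, $y_i\notin\langle Y\setminus\{y_i\}\rangle$, and $\langle Y\setminus\{y_i\}\rangle$ is a closed (hence, being finitely generated, also open-free of containing $y_i$) proper subgroup, so there is an open normal $N_i$ with $y_i\notin \langle Y\setminus\{y_i\}\rangle N_i$; intersect the finitely many $N_i$. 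Thus $d(\bar G)\le d(G)\le k\le s\le m(\bar G)$, and the finite Tarski theorem gives an independent generating set $\{\bar z_1,\dots,\bar z_k\}$ of $\bar G$.

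Next I would lift this back to $G$. Pick arbitrary preimages $z_1,\dots,z_k\in G$ of $\bar z_1,\dots,\bar z_k$. Then $\langle z_1,\dots,z_k\rangle N=G$, so by the Frattini argument / compactness $\langle z_1,\dots,z_k\rangle$ together with $N$ generates $G$; but $N$ itself need not lie in the Frattini subgroup, so $\{z_1,\dots,z_k\}$ need not generate $G$ on the nose. The remedy is the standard trick used in such lifting arguments: enlarge the generating set temporarily, or better, use that $N$ is topologically finitely generated (as a closed subgroup of $G$, which is generated by $d$ elements) and absorb generators of $N$ into the $z_i$ by multiplying on the right by elements of $N$. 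Concretely, since $\bar G$ is finite, $G=\overline{\langle z_1,\dots,z_k\rangle}$ iff the closure $H:=\overline{\langle z_1,\dots,z_k\rangle}$ surjects onto every finite quotient of $G$; one shows that the preimages can be chosen so that $H=G$ by a back-and-forth/inverse-limit argument over a chain of open normal subgroups $N=M_0\ge M_1\ge\cdots$ with $\bigcap M_j=1$, adjusting the $z_i$ modulo $M_j$ at each stage so that at stage $j$ they generate $G/M_j$ (possible because surjectivity onto a finite group is a generic/open condition and $d(G/M_j)\le k$). A cleaner alternative: replace $z_i$ by $z_i n_i$ where $(n_1,\dots,n_k)$ ranges over $N^k$; the set of such tuples for which $\langle z_1n_1,\dots,z_kn_k\rangle=G$ is the intersection over all open $M\le N$ of nonempty open subsets of $N^k$, hence nonempty by compactness.

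Finally I must check \emph{minimality} of the lifted set $\{z_1,\dots,z_k\}$ in $G$: if some $z_i\in\overline{\langle z_j : j\ne i\rangle}$, then modulo $N$ we would get $\bar z_i\in\langle \bar z_j : j\ne i\rangle$, contradicting independence of $\{\bar z_1,\dots,\bar z_k\}$ in $\bar G$. So independence is automatically inherited from the quotient, and combined with the generation property just arranged, $\{z_1,\dots,z_k\}$ is a minimal generating set of $G$ of size $k$. The main obstacle is the middle step — producing a genuine generating set of $G$ (not merely of a quotient) by correcting the lifts; the inverse-limit/compactness argument handles this, but one must be careful that at each finite stage the correction is compatible with the previous one, which is exactly what the chain condition and the bound $d(G/M_j)\le k$ guarantee. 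The edge cases $k=d(G)$ (take any minimal generating set of size $d(G)$) are trivial and can be mentioned in passing.
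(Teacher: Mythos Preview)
Your overall strategy coincides with the paper's: pass to a finite quotient $G/N$ in which the images of a minimal generating set of size $s\ge k$ remain minimal, apply Tarski's irredundant basis theorem in $G/N$ to get a minimal generating set $\{\bar z_1,\dots,\bar z_k\}$, and then lift. Minimality of the lift is, as you note, automatic from the quotient. The only substantive point is the lifting step itself.

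The paper dispatches that step in one line by invoking the profinite Gasch\"utz lemma \cite[15.30]{fj}: if $N$ is closed normal in $G$, $k\ge d(G)$, and $z_1,\dots,z_k$ generate $G$ modulo $N$, then there exist $n_1,\dots,n_k\in N$ with $G=\langle z_1n_1,\dots,z_kn_k\rangle$. Your compactness/inverse-limit sketch is precisely a reproof of this lemma, but as written it has a gap at the key point. You assert that for each open normal $M$ the set
\[
S_M=\{(n_1,\dots,n_k)\in N^k:\langle z_1n_1,\dots,z_kn_k\rangle M=G\}
\]
is nonempty, and justify this by saying ``$d(G/M)\le k$''. That inequality only tells you that \emph{some} $k$-tuple generates $G/M$; it does not by itself guarantee that one can be found in the prescribed coset $(z_1M,\dots,z_kM)\cdot(NM/M)^k$. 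Exactly this is the content of the \emph{finite} Gasch\"utz lemma applied to $G/M$ with normal subgroup $NM/M$: since $k\ge d(G)\ge d(G/M)$ and the $z_iM$ generate $G/M$ modulo $NM/M$, suitable corrections in $NM/M$ (hence lifts in $N$) exist. Once you insert that citation, each $S_M$ is nonempty and clopen, the family is directed, and compactness finishes the job. So your argument is correct after this fix, and it is not a different route but rather an unpacking of the reference the paper uses.
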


\begin{proof}
	Let $k\in \mathbb N$ with $d(G)\leq k \leq m(G).$ By the way in which $m(G)$ is defined, there exists a minimal generating set $X=\{x_1,\dots,x_t\}$ of $G$ with $k\leq t.$ For $1\leq i \leq t,$ let $X_i:=X\setminus \{x_i\}.$ Since ${\langle X_i\rangle}\neq G,$ there exists an open normal subgroup $N_i$ of $G$ such that $\langle X_i\rangle N_i\neq G.$ Set $N:=N_1\cap \dots \cap N_t.$ We have $\langle X_i\rangle N\leq \langle X_i\rangle N_i < G$ for any $1\leq i \leq t,$ so $\{x_1N,\dots,x_tN\}$ is a minimal generating set for $G/N$. By the Tarski irredundant basis theorem \cite[Theorem 4.4]{bs}, since $d(G/N)\leq d(G)\leq k\leq t\leq m(G/N),$ there exists a minimal generating set $\{y_1N,\dots,y_kN\}$ of $G/N.$ By the profinite version of the Gasch\"utz Lemma (see for example \cite[15.30]{fj}), there exists $n_1,\dots,n_k\in N$ such that $\{y_1n_1,\dots,y_un_k\}$ is a (minimal) generating set of $G.$
\end{proof}

\section{The independence graph}\label{inde}
 We will denote by $\tilde V(G)$ the set of the non-isolated vertices of the independence graph $\tilde \Gamma(G)$ and by $\tilde \Delta(G)$ the subgraph of $\tilde \Gamma(G)$ induced by $\tilde V(G).$ 

\begin{lemma}\label{inizio}Let $G$ be a finitely generated profinite group and
	let $g\in G.$ Then $g$ is isolated in $\tilde \Gamma(G)$ if and only if either $G=\langle g\rangle$ or $g\in \frat(G).$
\end{lemma}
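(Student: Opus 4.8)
The plan is to prove both implications directly. For the easy direction, suppose $g$ is \emph{not} isolated in $\tilde\Gamma(G)$; then there is an element $h\neq g$ and a finite minimal generating set $X$ of $G$ with $\{g,h\}\subseteq X$. In particular $G=\langle X\rangle$ and $X$ is irredundant, so $g\notin\langle X\setminus\{g\}\rangle$. Since $X\setminus\{g\}\supseteq\{h\}$ is nonempty, this shows $G\neq\langle g\rangle$ is impossible to conclude directly — rather, irredundancy gives $g\notin\frat(G)$ (otherwise $\langle X\setminus\{g\}\rangle=\langle X\rangle=G$, contradicting minimality), and also $G\neq\langle g\rangle$ because $X$ would then fail to be minimal (the singleton $\{g\}$ would already generate, so $h$ is redundant). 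Hence if $g$ is not isolated, then $G\neq\langle g\rangle$ and $g\notin\frat(G)$; contrapositively, if $G=\langle g\rangle$ or $g\in\frat(G)$, then $g$ is isolated.

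For the converse, assume $G\neq\langle g\rangle$ and $g\notin\frat(G)$; I must produce $h\neq g$ and a finite minimal generating set of $G$ containing both. Since $g\notin\frat(G)$, there is an open maximal subgroup $M$ of $G$ with $g\notin M$; fix an open normal subgroup $N\leq M$ of $G$. Work in the finite group $\bar G=G/N$. The image $\bar g$ satisfies $\bar g\notin\bar M$, so $\bar g\notin\frat(\bar G)$; and $\langle\bar g\rangle\neq\bar G$ since $\langle g\rangle N\leq MN=M<G$, wait — more carefully, if $\langle g\rangle=G$ then $G$ is (topologically) cyclic, excluded, so I should instead note $\langle\bar g\rangle$ may or may not equal $\bar G$ and handle this by passing to a possibly smaller quotient. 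The cleanest route: since $G\neq\langle g\rangle$, there is an open normal subgroup $N'$ with $\langle g\rangle N'\neq G$; replace $N$ by $N\cap N'$, so that in $\bar G=G/N$ we have both $\bar g\notin\frat(\bar G)$ and $\langle\bar g\rangle\neq\bar G$. Then $\bar g$ is a non-isolated vertex of $\tilde\Gamma(\bar G)$ by the finite-group result of \cite{ind} (the statement being extended here), so there is a minimal generating set $\bar X=\{\bar g,\bar h_2,\dots,\bar h_k\}$ of $\bar G$ with $k\geq 2$. By the profinite Gasch\"utz lemma (\cite[15.30]{fj}) applied with the partial assignment fixing the first coordinate to $g$, there exist $n_2,\dots,n_k\in N$ such that $\{g,h_2n_2,\dots,h_kn_k\}$ topologically generates $G$; a standard argument (each proper subset maps into a proper subgroup of $\bar G$, hence generates a proper subgroup of $G$) shows this set is minimal. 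Taking $h=h_2n_2\neq g$ gives an edge at $g$.

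The main obstacle is the converse direction, and within it the need to invoke the finite-group base case correctly: one must be sure that the characterization of isolated vertices for finite groups from \cite{ind} is available and says exactly that a vertex $\bar g$ of $\tilde\Gamma(\bar G)$ is isolated iff $\bar g\in\frat(\bar G)$ or $\langle\bar g\rangle=\bar G$. If instead the lemma is meant to be self-contained at the finite level, I would first dispose of the finite case by a direct Tarski-basis argument: starting from the two-element independent-extendability of $\{\bar g\}$, extend $\{\bar g\}$ to a minimal generating set using that $\bar g\notin\frat(\bar G)$ guarantees $\bar g$ lies outside some maximal subgroup, so $\bar g$ can be added to a generating set of that maximal subgroup to form an irredundant set. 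The other genuine subtlety is the simultaneous arrangement that the working quotient $\bar G=G/N$ satisfies \emph{both} $\langle\bar g\rangle\neq\bar G$ and $\bar g\notin\frat(\bar G)$; this is handled by intersecting the two relevant open normal subgroups as indicated, using that $\frat(G)$ is the intersection of the open maximal subgroups and that non-generation by $\langle g\rangle$ is detected in a finite quotient. Everything else — minimality of the lifted set, $h\neq g$ — is routine.
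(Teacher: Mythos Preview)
Your easy direction is fine. For the converse, however, the Gasch\"utz lemma does not say what you need: the standard statement (as in \cite[15.30]{fj}) asserts that if $d(G)\le k$ and $\{y_1N,\dots,y_kN\}$ generates $G/N$, then there exist $n_1,\dots,n_k\in N$ with $\langle y_1n_1,\dots,y_kn_k\rangle=G$. There is no version that lets you \emph{fix} a prescribed coordinate to $g$, and in any case you would also need $k\ge d(G)$, which a minimal generating set of $\bar G=G/N$ need not satisfy. So the step ``applied with the partial assignment fixing the first coordinate to $g$'' is not justified, and your subsequent minimality argument (which relies on the lift having exactly $k$ elements) does not go through either.

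Your fallback paragraph, by contrast, is essentially the paper's proof --- and the paper carries it out directly at the profinite level with no passage to a finite quotient and no appeal to \cite{ind}. One simply chooses an open maximal subgroup $M$ with $g\notin M$, takes a finite generating set $Y$ of $M$ (possible since $M$ is open in a finitely generated profinite group), and extracts from $\{g\}\cup Y$ a minimal generating set $X$ of $G$; necessarily $g\in X$ (otherwise $G=\langle X\rangle\le M$), and then either $X=\{g\}$ and $G=\langle g\rangle$, or $|X|\ge 2$ and $g$ has a neighbour. This sidesteps both the finite citation and the lifting problem entirely, and is shorter than your primary route.
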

\begin{proof}
	Suppose $g\notin \frat(G).$ There exists an open maximal subgroup $M$ of $G$ with $g\notin M.$ Since $M$ is an open subgroup of a finitely generated profinite group, there exists a finite subset $Y$ of $M$ with $M=\langle Y\rangle.$ The set $X=\{g\}\cup Y$ contains a minimal generating $X$ of $G$ and $g\in X$ (otherwise $G=\langle X\rangle \leq M$). If $X\neq \{g\},$ then $g$ is not isolated in $\tilde \Gamma(G),$ otherwise $\langle g \rangle=G.$
\end{proof}

\begin{lemma}\label{easy}Let $N$ be an open normal subgroup of a finitely generated profinite group $G.$
	If $Y=\{y_1N,\dots,y_tN\}$ is a minimal generating set of $G/N,$   then there exist $n_1,\dots,n_u\in N$ such that
	$\{y_1,\dots,y_t,n_1,\dots,n_u\}$ is a minimal generating set of $G.$
\end{lemma}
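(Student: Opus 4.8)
The plan is to build the required set as $\{y_1,\dots,y_t\}$ enlarged by a carefully chosen finite subset of $N$, and then to argue minimality by treating the extra generators (which live in $N$) and the fixed elements $y_i$ separately. The whole point is to keep the $y_i$ untouched, so the reduction to an irredundant set must be organized so that the $y_i$ are never candidates for deletion.

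First I would exhibit a finite generating set of $G$ that keeps the given $y_i$ fixed and puts all extra generators inside $N$. Set $H=\langle y_1,\dots,y_t\rangle$. Since $\{y_1N,\dots,y_tN\}$ generates $G/N$, we have $HN=G$. Fixing a finite topological generating set $g_1,\dots,g_d$ of $G$ (which exists because $G$ is finitely generated) and writing $g_k=h_km_k$ with $h_k\in H$ and $m_k\in N$, we get $g_k\in\langle y_1,\dots,y_t,m_1,\dots,m_d\rangle$ for every $k$, and hence $G=\langle y_1,\dots,y_t,m_1,\dots,m_d\rangle$ with each $m_k\in N$.

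Next I would prune only the $N$-part. Among the finitely many subsets $T\subseteq\{m_1,\dots,m_d\}$ for which $\{y_1,\dots,y_t\}\cup T$ generates $G$ (the full set being one such $T$), I would choose one minimal under inclusion and relabel its elements $n_1,\dots,n_u\in N$, allowing $u=0$. Because the resulting set $S=\{y_1,\dots,y_t,n_1,\dots,n_u\}$ is finite and generates $G$, minimality of $S$ as a generating set is equivalent to irredundancy (if a proper subset generated, then $S$ minus one element would too), so it suffices to verify that removing any single element of $S$ destroys generation.

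Finally I would check irredundancy in two cases. Deleting some $n_j$ fails by the inclusion-minimality of $T$. Deleting some $y_i$ fails because every $n_j$ lies in $N$, so the image of $S\setminus\{y_i\}$ in $G/N$ is $\{y_1N,\dots,\widehat{y_iN},\dots,y_tN\}$, which does not generate $G/N$ since $\{y_1N,\dots,y_tN\}$ is a minimal generating set there; thus $S\setminus\{y_i\}$ does not even surject onto $G/N$. The only delicate point is exactly this bookkeeping: rather than naively removing redundant elements one at a time (which might discard a $y_i$), I minimize only over subsets of the $N$-part, and then the projection to the finite group $G/N$ makes the protection of the $y_i$ automatic.
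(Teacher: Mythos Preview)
Your proof is correct and uses the same core idea as the paper: the extra generators lie in $N$, so the projection to $G/N$ forces every $y_i$ to remain in any generating subset. The paper's version is shorter: it simply takes a finite generating set $Z$ of $N$ (which exists since $N$ is open in a finitely generated profinite group), picks any minimal generating subset $X$ of $Y\cup Z$, and observes that the minimality of $\{y_1N,\dots,y_tN\}$ in $G/N$ forces $Y\subseteq X$. In particular, your precaution of minimizing only over the $N$-part is unnecessary---the very projection argument you use for the $y_i$ already guarantees that a naive minimization of the whole set cannot discard any $y_i$.
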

\begin{proof}
Let $Z$ be a finite generating set of $N.$	Since $G=\langle Y, Z\rangle,$ $Y\cup Z$ contains a minimal generating set $X$ of $G,$ and the minimality property of $Y$ implies $Y\subseteq X.$
\end{proof}

We will  write $x_1\sim_G x_2$ if $x_1$ and $x_2$ belong to the same connected component of $\tilde\Delta(G).$ The following lemma is an immediate consequence of Lemma \ref{easy}.

\begin{lemma}\label{basic}
	Let $N$ be an open normal subgroup of a finite group $G$ and let $x, y \in G.$ If $xN, yN \in \tilde V(G/N)$ and $xN\sim_{G/N}yN,$ then $x\sim_G y.$
\end{lemma}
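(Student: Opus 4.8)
The statement to prove is Lemma~\ref{basic}: if $N$ is an open normal subgroup of a finitely generated profinite group $G$ (the excerpt says "finite group" but in context it should be profinite — I'll work with whichever is cleaner), and $x,y\in G$ with $xN,yN\in\tilde V(G/N)$ and $xN\sim_{G/N}yN$, then $x\sim_G y$.

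The plan is to first reduce to the adjacency case. Since $xN\sim_{G/N}yN$ means there is a path $xN=z_0N, z_1N,\dots, z_rN=yN$ in $\tilde\Delta(G/N)$, it suffices by transitivity of $\sim_G$ to show: whenever $aN, bN\in\tilde V(G/N)$ are \emph{adjacent} in $\tilde\Gamma(G/N)$, then $a\sim_G b$. So I would assume $aN$ and $bN$ are adjacent, i.e. there is a finite minimal generating set of $G/N$ of the form $\{aN, bN, c_1N,\dots,c_sN\}$.

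The key step is to lift this to $G$ using Lemma~\ref{easy}. Applying Lemma~\ref{easy} to the minimal generating set $\{aN,bN,c_1N,\dots,c_sN\}$ of $G/N$, I obtain $n_1,\dots,n_u\in N$ so that $\{a,b,c_1,\dots,c_s,n_1,\dots,n_u\}$ is a minimal generating set of $G$. This immediately witnesses that $a$ and $b$ lie in a common finite minimal generating set of $G$, hence $a$ and $b$ are adjacent in $\tilde\Gamma(G)$ — provided $a\neq b$. If $a=b$ there is nothing to prove. So $a$ and $b$ are joined by an edge of $\tilde\Gamma(G)$; in particular both are non-isolated, so this edge lies in $\tilde\Delta(G)$, giving $a\sim_G b$. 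Chaining this over the path $z_0,\dots,z_r$ yields $x\sim_G y$.

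I don't expect a genuine obstacle here: the only subtlety is bookkeeping about the $z_i$ being honest vertices of $\tilde\Delta(G/N)$ (non-isolated), so that consecutive pairs $z_{i-1}N, z_iN$ are genuinely adjacent and we may run the lifting argument on each; and the trivial edge case $z_{i-1}=z_i$ or $a=b$, which is handled by noting $\sim_G$ is reflexive on its domain. One should also double-check that the lifted set $\{a,b,c_1,\dots,c_s,n_1,\dots,n_u\}$ having $a,b$ as genuinely distinct elements follows from $aN\neq bN$, which holds because $aN$ and $bN$ are distinct vertices of an edge in $\tilde\Gamma(G/N)$. This is why the lemma is flagged as "an immediate consequence of Lemma~\ref{easy}" — the entire content is the single application of the lifting lemma together with transitivity.

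\begin{proof}
	It suffices to prove the statement when $xN$ and $yN$ are \emph{adjacent} vertices of $\tilde\Gamma(G/N)$; the general case then follows by applying this to each edge of a path in $\tilde\Delta(G/N)$ joining $xN$ to $yN$ and using the transitivity of the relation $\sim_G$. So assume $xN\neq yN$ and that there is a finite minimal generating set $\{xN,yN,c_1N,\dots,c_sN\}$ of $G/N$. By Lemma~\ref{easy} there exist $n_1,\dots,n_u\in N$ such that $\{x,y,c_1,\dots,c_s,n_1,\dots,n_u\}$ is a minimal generating set of $G$. Since $xN\neq yN$ we have $x\neq y$, so $x$ and $y$ are two distinct elements lying in a common finite minimal generating set of $G$; hence they are joined by an edge of $\tilde\Gamma(G)$. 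In particular $x,y\in\tilde V(G)$ and this edge lies in $\tilde\Delta(G)$, so $x\sim_G y$.
\end{proof}
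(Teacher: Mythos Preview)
Your proof is correct and is exactly the argument the paper has in mind: the paper states that Lemma~\ref{basic} is ``an immediate consequence of Lemma~\ref{easy}'' and gives no further proof, and your reduction to the adjacency case followed by a single application of Lemma~\ref{easy} (choosing $x$ and $y$ as the representatives of $xN$ and $yN$) is precisely that immediate consequence. Your side remark that ``finite group'' should read ``finitely generated profinite group'' is also correct; the argument works verbatim in either setting.
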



\begin{thm}\label{main}
	If $G$ is a finitely generated profinite group, then the graph $\tilde \Delta(G)$ is connected.
\end{thm}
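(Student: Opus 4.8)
The plan is to reduce the connectivity of $\tilde\Delta(G)$ to the connectivity of $\tilde\Delta(G/N)$ for suitable open normal subgroups $N$, and then exploit a compactness/inverse-limit argument together with the known finite case. First I would recall from Lemma \ref{inizio} that the non-isolated vertices are exactly the elements lying outside $\frat(G)$ that are not topological generators of $G$; so let $x,y\in\tilde V(G)$. Pick an open normal subgroup $N$ small enough that $x\notin M$ and $y\notin M'$ for suitable open maximal subgroups $M,M'$ chosen with $M/N, M'/N$ maximal in $G/N$, which is possible since $\frat(G)$ is the intersection of the open maximal subgroups; shrinking $N$ further (intersecting with finitely many open normal subgroups as in the proof of the Tarski-type theorem above) we may also arrange that $xN$ and $yN$ are non-isolated in $\tilde\Gamma(G/N)$ and that $x,y$ are not killed, i.e. $xN\neq N\neq yN$ and $\langle xN\rangle\neq G/N\neq\langle yN\rangle$. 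Here I would need a small lemma: if $G$ is finitely generated profinite and $g\in\tilde V(G)$, then $gN\in\tilde V(G/N)$ for all sufficiently small open normal $N$ — this follows by taking a finite minimal generating set of $G$ containing $g$, say of size $t\geq 2$, and choosing $N$ inside the intersection of the open normal subgroups witnessing that each $t-1$-element subset fails to generate, exactly as in Section \ref{tacchino}.

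Now invoke the finite case: by \cite{ind} (or the already-quoted consequence of it) $\tilde\Delta(G/N)$ is connected, so $xN\sim_{G/N}yN$. By Lemma \ref{basic} (applied with the \emph{profinite} group $G$ in place of a finite group — the proof is identical since it only uses Lemma \ref{easy}, which is stated for profinite $G$) we conclude $x\sim_G y$. This shows any two non-isolated vertices lie in the same component of $\tilde\Delta(G)$, which is the claim.

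The main obstacle, and the point that needs care, is the lifting lemma making $gN$ non-isolated in $G/N$: one must verify that for $g\in\tilde V(G)$ there really is an open normal $N$ with $gN\in\tilde V(G/N)$, and that this can be done \emph{simultaneously} for the two given elements $x,y$ (just intersect the two subgroups obtained) and compatibly with the requirement that $xN,yN$ are themselves neither trivial nor generators of $G/N$ — the latter is automatic once $g$ is not a generator of $G$, by choosing $N$ avoiding the finitely many open maximal subgroups not containing $g$ is unnecessary; rather one simply notes $\langle g\rangle\neq G$ forces $\langle gN\rangle\neq G/N$ for the relevant $N$. A secondary subtlety is that Lemma \ref{basic} is literally stated only for finite $G$; I would either restate and re-prove it for profinite $G$ (the proof via Lemma \ref{easy} carries over verbatim) or apply it with $G/N$ in the role of the ``finite group'' and a further quotient in the role of its normal subgroup, using that $\tilde\Delta(G/N)$ connected already gives $xN\sim_{G/N}yN$ and then Lemma \ref{easy} lifts a minimal generating set of $G/N$ containing $xN$ to one of $G$ containing $x$, so that each edge of the path in $\tilde\Delta(G/N)$ lifts to an edge in $\tilde\Delta(G)$.
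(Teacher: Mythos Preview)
Your proposal is correct and follows essentially the same route as the paper: find an open normal subgroup $N$ small enough that $xN$ and $yN$ are distinct non-isolated vertices of $\tilde\Gamma(G/N)$, invoke the finite case \cite{ind} to connect them in $\tilde\Delta(G/N)$, and lift the path via Lemma~\ref{easy}/Lemma~\ref{basic}. The paper's write-up is more compact---it simply cites Lemma~\ref{inizio} to produce $N_1,N_2$ with $x_iN_i\in\tilde V(G/N_i)$, intersects with an $N_3$ separating $x_1$ from $x_2$, and then applies \cite[Theorem~1]{ind} and Lemma~\ref{basic} directly---but your more explicit ``lifting lemma'' (pushing a minimal generating set containing $g$ down to $G/N$) is exactly what underlies that citation.
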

\begin{proof}
	Let $x_1, x_2\in \tilde V(G).$ If follows from Lemma \ref{inizio} that there exists two open normal subgroups, $N_1$ and $N_2$ of $G$ with $x_1N_1\in V(G/N_1)$ and $x_2N_2\in \tilde V(G/N_2).$ Moreover there exists an open normal subgroup $N_3$ with $x_1x_2^{-1}\notin N_3.$ Let $N=N_1\cap N_2\cap N_3.$ Since $x_1N, x_2N$ are distinct elements of $\tilde V(G/N),$ it follows from \cite[Theorem 1]{ind} that $x_1N_1 \sim_{G/N} x_2N_2.$ By Lemma \ref{basic}, $x_1\sim_G x_2.$
\end{proof}

\section{The virtually independence graph}\label{ultima}

Let $G$ be a (finitely generated) profinite group. We say that two elements $x$ and $y$ are virtually independent if there exists a minimal generating set on an open subgroup of $G$ containing them. We define the virtually independent graph $\tilde \Gamma_{\rm{virt}}(G)$ of $G$ as the graph whose vertices are the elements of $G$ and in which two vertices are joined by an edge if and only if they are virtually independent. Moreover we denote by  $\tilde \Delta_{\rm{virt}}(G)$ the subgraph induced by the non-isolated vertices of the virtually independent graph of $G$.

\begin{thm}\label{preced} Let $G$ be a finite group. If $\tilde \Gamma_{\rm{virt}}(G)$ contains a non-trivial isolated vertex $x,$ then either $G=\langle x \rangle$ or one of the following occurs:
\begin{enumerate}
	\item $G\cong C_{p^n}$ is a cyclic group of $p$-power order and all the vertices of $\tilde \Gamma_{\rm{virt}}(G)$ are isolated.
	\item $G\cong Q_{2^n}=\langle a, b \mid a^{2^{n-1}}, b^2=a^{2^{n-2}}, a^{-1}ba=b^{-1}\rangle$ is a generalized quaternion group and $x=b^2$ is the unique non-trivial isolated vertex of  $\tilde\Gamma_{\rm{virt}}(G).$ 
\end{enumerate}
\begin{proof}
	Let $x$ be a non-trivial isolated vertex of  $\tilde \Gamma_{\rm{virt}}(G)$. If $y\in G,$ then $\{x, y\}$ is not a virtually independent subset, so either $x\in \langle y \rangle$ or
	$y\in \langle x \rangle$. In any case, $[x,y]=1.$ So $x\in Z(G).$ Assume $G\neq \langle x \rangle$ and let $p$ be a prime divisor of $|x|.$ Then $\langle x \rangle/\langle x^p \rangle$ is the unique minimal subgroup of $G/\langle x^p \rangle$ (if $\langle y \rangle/\langle x^p \rangle$ were another minimal subgroup of $G,$ then $x$ and $y$ would be virtually independent). In particular $G/\langle x^p \rangle$ is a non-trivial $p$-group and $p$ is the unique prime divisor of $|x|.$ All the elements of order $p$ in $G$ belong to $\langle x\rangle,$ so $G$ contains a unique minimal subgroup
	and therefore $G$ is either cyclic of $p$-power order or generalized quaternion (see \cite[5.3.6]{rob}).
\end{proof}
\end{thm}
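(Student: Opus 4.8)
The plan is to convert the graph-theoretic hypothesis into a statement about the subgroup-containment order on cyclic subgroups of $G$. Since $G$ is finite, every subgroup is open, and for distinct $x,y$ the pair $\{x,y\}$ is contained in a minimal generating set of \emph{some} subgroup $H\leq G$ if and only if $\{x,y\}$ is already a minimal generating set of $\langle x,y\rangle$, i.e.\ if and only if $x\notin\langle y\rangle$ and $y\notin\langle x\rangle$: one direction takes $H=\langle x,y\rangle$, and the other restricts the minimality condition in the larger set. Hence $x$ is isolated in $\tilde \Gamma_{\rm{virt}}(G)$ exactly when, for every $y\in G$, either $y\in\langle x\rangle$ or $x\in\langle y\rangle$. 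If $x$ is a non-trivial isolated vertex, then $\langle x,y\rangle$ is cyclic for every $y$, so $x$ commutes with all of $G$ and $x\in Z(G)$ immediately.

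Next I would add the hypothesis $G\neq\langle x\rangle$ and aim to force $G\cong C_{p^n}$ or $G\cong Q_{2^n}$. First, every element of prime order lies in $\langle x\rangle$: if $|y|$ is prime then isolation gives $y\in\langle x\rangle$ or $x\in\langle y\rangle$, and in the second case $\langle x\rangle=\langle y\rangle$ since $x\neq 1$. By Cauchy this makes the primes dividing $|G|$ coincide with those dividing $|x|$ and puts every minimal subgroup of $G$ inside the cyclic group $\langle x\rangle$. The crucial step is that $|x|$ is a prime power. Fix a prime $p\mid |x|$ and pass to $\overline G=G/\langle x^p\rangle$ (legitimate since $\langle x^p\rangle\leq\langle x\rangle\leq Z(G)$). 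The same ``isolation forbids virtual independence'' argument shows $\langle x\rangle/\langle x^p\rangle$, of order $p$, is the \emph{unique} minimal subgroup of $\overline G$: a second minimal subgroup $\langle\bar y\rangle$ would meet $\langle\bar x\rangle$ trivially, forcing $y\notin\langle x\rangle$, while $x\in\langle y\rangle$ would force $\langle\bar x\rangle=\langle\bar y\rangle$; so $x,y$ would be virtually independent, a contradiction. By Cauchy, $\overline G$ is therefore a $p$-group, say $|\overline G|=p^e$, whence $|G|=p^{e-1}|x|$, so the $p'$-part of $|G|$ equals the $p'$-part of $|x|$. If $|x|$ had two distinct prime divisors, applying this with each of them would pin down every exponent in the factorisation of $|G|$ and give $|G|=|x|$, contrary to $G\neq\langle x\rangle$. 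Hence $|x|=p^b$; then $\langle x^p\rangle$ is a $p$-group and so is $\overline G$, so $G$ is a $p$-group with a unique subgroup of order $p$ (the one inside $\langle x\rangle$), and by the classical structure theorem \cite[5.3.6]{rob}, $G$ is cyclic of $p$-power order or generalized quaternion.

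Finally I would analyse the two outcomes. If $G\cong C_{p^n}$ then its subgroups form a chain, so $y\in\langle x\rangle$ or $x\in\langle y\rangle$ for \emph{every} pair of elements; thus $\tilde \Gamma_{\rm{virt}}(G)$ has no edges and all vertices are isolated, which is case (1). If $G\cong Q_{2^n}$ then $Z(G)$ has order $2$ and equals $\langle b^2\rangle$, so $x\in Z(G)\setminus\{1\}$ forces $x=b^2$; conversely $b^2$ is isolated because every non-trivial subgroup of a generalized quaternion group contains the unique involution $b^2$, and any non-trivial isolated vertex must lie in $Z(G)=\{1,b^2\}$. Hence $b^2$ is the unique non-trivial isolated vertex, which is case (2).

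I expect the prime-power step for $|x|$ to be the main obstacle: one must upgrade the very weak condition ``$\langle x\rangle$ is comparable under inclusion with $\langle y\rangle$ for every $y$'' into the rigid conclusion that $G$ is essentially a cyclic or quaternion $2$-group. Passing to $G/\langle x^p\rangle$ to manufacture a group with a unique minimal subgroup, and the subsequent arithmetic bookkeeping relating $|G|$ and $|x|$, is what carries this through; once $x\in Z(G)$ and the prime-power property are in hand, the rest is routine, and identifying $b^2$ in the quaternion case only uses standard facts about $Q_{2^n}$.
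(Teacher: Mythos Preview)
Your proof is correct and follows essentially the same route as the paper's: both translate isolation of $x$ into the comparability condition $x\in\langle y\rangle$ or $y\in\langle x\rangle$, deduce $x\in Z(G)$, pass to $G/\langle x^p\rangle$ to obtain a group with a unique minimal subgroup, conclude that $G$ is a $p$-group with a unique minimal subgroup, and invoke \cite[5.3.6]{rob}. Your treatment is somewhat more detailed than the paper's---you spell out the arithmetic showing that $|x|$ must be a prime power (the paper asserts this in one phrase) and you explicitly verify the final claims about $C_{p^n}$ and $Q_{2^n}$---but the underlying argument is the same.
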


\begin{lemma}\label{ken}
	Let $G$ be a finitely generated profinite group and let $N$ be an open normal subgroup of $G$. If $xN$ and $yN$ are adjacent vertices of $\tilde \Gamma_{\rm{virt}}(G/N),$ then $x$ and $y$ are adjacent vertices of $\tilde \Gamma_{\rm{virt}}(G).$
\end{lemma}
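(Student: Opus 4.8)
The plan is to reduce the statement to Lemma \ref{easy}, applied not to $G$ itself but to the open subgroup of $G$ lying over the relevant open subgroup of $G/N$.

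First I would use the hypothesis: since $xN$ and $yN$ are adjacent in $\tilde \Gamma_{\rm{virt}}(G/N)$, there is an open subgroup $\bar H$ of $G/N$ and a minimal generating set $\{x_1N,\dots,x_kN\}$ of $\bar H$ with $x_1=x$ and $x_2=y$ (in particular $xN\neq yN$). Let $H$ be the full preimage of $\bar H$ under the projection $G\to G/N$. Then $H$ is a subgroup of $G$ containing $N$, so $H$ is open (its index equals $|(G/N):\bar H|<\infty$), $N$ is an open normal subgroup of $H$, and $H/N=\bar H$. Moreover $H$, being an open subgroup of the finitely generated profinite group $G$, is itself a finitely generated profinite group, so Lemma \ref{easy} is applicable to the pair $(H,N)$.

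Now I would apply Lemma \ref{easy} with $H$ in the role of $G$ and with the minimal generating set $\{x_1N,\dots,x_kN\}$ of $H/N$: it yields elements $n_1,\dots,n_u\in N$ such that $\{x_1,\dots,x_k,n_1,\dots,n_u\}$ is a minimal generating set of $H$. This set contains $x=x_1$ and $y=x_2$, and since $xN\neq yN$ forces $x\neq y$, it is exactly a witness that $x$ and $y$ are virtually independent. Hence $x$ and $y$ are adjacent vertices of $\tilde \Gamma_{\rm{virt}}(G)$.

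I do not expect any genuine obstacle here: the entire content is carried by Lemma \ref{easy}, and the only point worth recording explicitly is that an open subgroup of a finitely generated profinite group is again finitely generated, which is what makes that lemma available for $H$. The statement is the virtual analogue of Lemma \ref{basic}, expressing that minimal generating sets lift through open normal subgroups.
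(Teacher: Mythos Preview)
Your proof is correct and follows essentially the same idea as the paper's: lift a minimal generating set modulo $N$ to a minimal generating set of an open subgroup of $G$ by adjoining finitely many generators of $N$ and then pruning. The one cosmetic difference is that the paper works with the smaller open subgroup $\langle x,y\rangle N$ rather than the full preimage of $\bar H$ (using that adjacency of $xN,yN$ already forces $\{xN,yN\}$ to be a minimal generating set of $\langle xN,yN\rangle$), and performs the pruning step by hand instead of citing Lemma~\ref{easy}.
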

\begin{proof}
Let $\{n_1,\dots,n_t\}$ be a finite generating set of $N$ and set
	$A:=\{x,y,n_1,\dots,n_t\}.$ Notice that $H:=\langle A\rangle $ is an open subgroup of $G$ and that $A$ contains a minimal generating set $B$ of $H$. Since $xN$ and $yN$ are adjacent vertices of $\tilde \Gamma_{\rm{virt}}(G/N),$ 
	$\langle x, y\rangle N \neq \langle x \rangle N\neq \langle  y\rangle N$ and therefore
	 $x, y \in B$ and $x$ and $y$ are adjacent in $\Gamma_{\rm{virt}}(G).$ 
\end{proof}

\begin{thm}
Let $G$ be a finitely generated profinite group. If $G$ is infinite and $\tilde \Gamma_{\rm{virt}}(G)$ contains a non-trivial isolated vertex, then $G$ is procyclic.
\end{thm}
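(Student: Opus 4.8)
The plan is to reduce the infinite profinite case to the finite quotients already handled by Theorem \ref{preced}, using Lemma \ref{ken} as the bridge. Let $g$ be a non-trivial isolated vertex of $\tilde\Gamma_{\rm{virt}}(G)$ with $G$ infinite. First I would record the contrapositive of Lemma \ref{ken}: if $x$ and $y$ are \emph{not} adjacent in $\tilde\Gamma_{\rm{virt}}(G)$, then for every open normal subgroup $N$ of $G$, the images $xN$ and $yN$ are not adjacent in $\tilde\Gamma_{\rm{virt}}(G/N)$. Hence, since $g$ is isolated in $\tilde\Gamma_{\rm{virt}}(G)$, for every open normal $N$ the image $gN$ is an isolated vertex of $\tilde\Gamma_{\rm{virt}}(G/N)$ (it is adjacent to nothing, and its non-adjacency to any lift of any vertex passes to the quotient). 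So in every finite quotient $G/N$ of $G$, the element $gN$ is an isolated vertex.

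Next I would split according to whether $gN$ is trivial. If $gN = 1$ for all sufficiently small $N$, then $g = 1$, contradicting that $g$ is non-trivial; so there are arbitrarily small open normal $N$ with $gN \neq 1$. For each such $N$, Theorem \ref{preced} applies to the finite group $G/N$ and its non-trivial isolated vertex $gN$, giving that either $G/N = \langle gN\rangle$, or $G/N$ is cyclic of prime-power order (with all vertices isolated), or $G/N$ is generalized quaternion $Q_{2^n}$ with $gN$ its unique central involution. In all three of these cases, $G/N$ is either cyclic or generalized quaternion; in particular $G/N$ has a cyclic subgroup of index at most $2$, and more to the point, in the first two cases $G/N$ is cyclic, and in the third $G/N$ has a unique subgroup of order $2$.

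Now I would take an inverse limit argument. Fix a cofinal (downward directed) family of open normal subgroups $N$ with $gN \neq 1$; for each such $N$, the quotient $G/N$ is cyclic or generalized quaternion. If infinitely many $G/N$ in this cofinal family are cyclic, then $G$, being the inverse limit of these cyclic quotients (a cofinal system suffices), is procyclic and we are done. Otherwise, cofinally many $G/N$ are generalized quaternion; but the generalized quaternion groups $Q_{2^n}$ do not form an inverse system with surjections for increasing $n$ in a way compatible with $G$ being infinite pro-2 — more precisely, a surjection $Q_{2^m} \twoheadrightarrow Q_{2^n}$ with $m > n$ does not exist (the Frattini quotient of $Q_{2^m}$ has order $4$, and its proper quotients are cyclic of order $2$ or dihedral, never generalized quaternion unless $m = n$). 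Hence the family of $N$ with $G/N$ generalized quaternion cannot be cofinal in an infinite $G$; it can only produce a single quotient $Q_{2^n}$, which would force $G$ finite. Therefore the cyclic case must occur cofinally, and $G$ is procyclic.

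The main obstacle I expect is the last step: carefully justifying that the quaternion alternative cannot persist cofinally. One must rule out an inverse limit of generalized quaternion groups producing an infinite $G$. The clean way is the observation that any continuous surjection between generalized quaternion 2-groups is an isomorphism: a generalized quaternion group $Q_{2^n}$ is indecomposable and its only quotients are $Q_{2^n}$ itself, cyclic groups (from the center and its subgroups, giving $C_2$), and dihedral groups (of order $2^{n-1}$ or less); none of the latter are generalized quaternion. So if $G/N_0 \cong Q_{2^n}$ for some $N_0$, then for every $N \leq N_0$ the map $G/N \twoheadrightarrow G/N_0$ forces $G/N$ to have $Q_{2^n}$ as a quotient, but $G/N$ is itself cyclic or generalized quaternion by the above — if cyclic it cannot surject onto non-abelian $Q_{2^n}$, and if generalized quaternion the surjection must be an isomorphism, so $G/N \cong G/N_0$ and $N = N_0$. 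Thus $N_0$ is the smallest open normal subgroup, $G = G/N_0$ is finite, contradiction. So $g$ trivial in no cofinal family and $G$ non-cyclic is impossible; $G$ is procyclic.
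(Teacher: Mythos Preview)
Your argument is correct and follows essentially the same route as the paper: push isolation of $g$ down to every finite quotient via Lemma~\ref{ken}, apply Theorem~\ref{preced} there, and rule out the generalized-quaternion alternative by observing that no proper quotient of $Q_{2^m}$ is again generalized quaternion, so the existence of a single $N_0$ with $G/N_0\cong Q_{2^n}$ would force $N_0$ to lie in every open normal subgroup and hence $G$ to be finite. The only cosmetic difference is that the paper works with the family $\{N: x\notin N,\ \langle x\rangle N\neq G\}$, which absorbs your case $G/N=\langle gN\rangle$ into the setup; one small wording issue in your sketch is that ``infinitely many cyclic quotients'' should read ``a cofinal family of cyclic quotients'', but your final paragraph in effect establishes exactly that (all $G/N$ with $g\notin N$ are cyclic), so the conclusion stands.
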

\begin{proof}
	Assume that $x$ is a non-trivial isolated vertex in $\tilde \Gamma_{\rm{virt}}(G)$ and that $G$ is not topologically generated by $x.$ Let $\mathcal N$ be the set of the open normal subgroups $N$ of $G$ with the property that $x\notin N$ and $G\neq \langle x\rangle N.$ Let $N\in \mathcal N.$ By Lemma \ref{ken}, $xN$ is an isolated vertex of $\tilde \Gamma_{\rm{virt}}(G/N),$ so, by Theorem \ref{preced}, $G/N$ is either a cyclic $p$-group or a generalized quaternion group. Assume that $G/M$ is a generalized quaternion group for some $M\in \mathcal N.$ Since $\cap_{N\in \mathcal N}N=1$
	and no proper epimorphic image of a generalized quaternion group is generalized quaternion, we would have $M=1$ and 
	 $G$ would be finite, against our assumption. So $G/N$ is cyclic  for every $N\in \mathcal N,$ and therefore $G$ is procyclic. 
\end{proof}

\begin{lemma}\label{casofinito}
	If $G$ is a finite  group, then the graph $\tilde \Delta_{\rm{virt}}(G)$ is connected and $\diam(\tilde \Delta_{\rm{virt}}(G))\leq 3.$
\end{lemma}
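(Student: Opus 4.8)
The plan is to reduce the finite case to the known connectivity of the ordinary independence graph $\tilde\Delta(G)$, which by Theorem \ref{main} (in fact by \cite[Theorem 1]{ind}) is connected with diameter at most $3$. The key observation is that every edge of $\tilde\Gamma(G)$ is also an edge of $\tilde\Gamma_{\rm{virt}}(G)$: if $x$ and $y$ lie in a minimal generating set of $G$ itself, then $G$ is an open subgroup of itself and that same set witnesses virtual independence. Hence $\tilde\Gamma(G)$ is a subgraph of $\tilde\Gamma_{\rm{virt}}(G)$ on the same vertex set, and any two vertices that are non-isolated in $\tilde\Gamma(G)$ are joined by a path of length at most $3$ already inside $\tilde\Delta_{\rm{virt}}(G)$.

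So the work is to handle vertices $g$ that are non-isolated in $\tilde\Gamma_{\rm{virt}}(G)$ but isolated in $\tilde\Gamma(G)$. By Lemma \ref{inizio} (in its finite-group form), such a $g$ satisfies either $G=\langle g\rangle$ or $g\in\frat(G)$. If $G=\langle g\rangle$ is cyclic, then Theorem \ref{preced} tells us exactly what happens: either all vertices of $\tilde\Gamma_{\rm{virt}}(G)$ are isolated (cyclic $p$-group case), in which case $\tilde\Delta_{\rm{virt}}(G)$ is empty and there is nothing to prove, or $G$ is cyclic but not a $p$-group, in which case $G$ has at least two distinct minimal subgroups and one checks directly that $g$ is in fact non-isolated in $\tilde\Gamma(G)$ too (so this subcase does not actually arise). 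The substantive case is $g\in\frat(G)$, $g\neq 1$: here I would show that any neighbour $h$ of $g$ in $\tilde\Gamma_{\rm{virt}}(G)$ lies in an open (proper) subgroup $H<G$ together with $g$ in a minimal generating set of $H$; since $g\in\frat(G)$, one argues that $H$ must be proper and then that $g,h$ are both non-isolated in $\tilde\Gamma(H)$ — but this does not immediately connect them inside $\tilde\Delta_{\rm{virt}}(G)$. The cleaner route is to bound the distance from $g$ to the ``bulk'' $\tilde V(G)$ of vertices non-isolated in $\tilde\Gamma(G)$: show that for each such $g$ there is a common neighbour of $g$ with some $w\in\tilde V(G)$, so that $g$ is at distance at most $2$ from $\tilde V(G)$, and hence any two such $g$ are at distance at most $2+3+2$; then sharpen to $3$ by a more careful choice.

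To get the sharp bound of $3$ I would instead argue as follows. Take $g_1,g_2$ non-isolated in $\tilde\Gamma_{\rm{virt}}(G)$. Each $g_i$ has a neighbour $h_i$, witnessed by a minimal generating set $B_i$ of an open subgroup $H_i$ with $g_i,h_i\in B_i$. Replacing $h_i$ by a suitable element of $B_i\setminus\{g_i\}$ if necessary, arrange that $h_i\notin\frat(G)$ and that $h_i$ generates together with an open subgroup a proper subgroup of $G$ not containing $g_{3-i}$; then $h_i\in\tilde V(G)$, so $h_1\sim_G h_2$ in $\tilde\Delta(G)$ with a path of length at most $3$, all of whose edges survive in $\tilde\Delta_{\rm{virt}}(G)$. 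If $h_1=h_2$ or the path has length $\le 1$ one gets distance $\le 3$ from $g_1$ to $g_2$ directly; the only danger is when the $h_i$-path has length exactly $3$, giving $g_1\!-\!h_1\!-\!\cdots\!-\!h_2\!-\!g_2$ of length $5$. To defeat this I would exploit the extra freedom in $\tilde\Gamma_{\rm{virt}}$: choose the open subgroups $H_i$ and the minimal generating sets so that $h_1$ and $h_2$ can be taken to lie in a common minimal generating set of a single open subgroup of $G$ (e.g. take $H$ generated by $g_1,g_2$ together with enough elements to contain a minimal generating set in which both $g_1$ and $g_2$ survive, using the Tarski basis theorem for $H$), making $g_1\sim g_2$ at distance at most $3$ in one step.

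The main obstacle I anticipate is precisely controlling when a Frattini element $g$ can genuinely be non-isolated in $\tilde\Gamma_{\rm{virt}}(G)$: since $g\in\frat(G)$ it lies in every maximal subgroup of $G$, yet it might still be part of a minimal generating set of some \emph{proper} open subgroup $H$ where $g\notin\frat(H)$. Pinning down the structure forced by this — and in particular verifying that such a $g$ is always within distance $3$ of the rest of the graph, rather than merely within some bounded distance — is the delicate point, and it is where I expect the finite-group case analysis (using \cite[5.3.6]{rob} on groups with a unique minimal subgroup, and the classification in Theorem \ref{preced}) to be needed in full.
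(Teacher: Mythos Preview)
Your approach is genuinely different from the paper's, and as written it does not close the gap to diameter $3$.

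The paper does not go through $\tilde\Delta(G)$ at all. Its first observation is the one you are missing: if $\langle x\rangle\not\le\langle y\rangle$ and $\langle y\rangle\not\le\langle x\rangle$, then $\{x,y\}$ is itself a minimal generating set of the finite subgroup $\langle x,y\rangle$, so $x$ and $y$ are adjacent in $\tilde\Gamma_{\rm{virt}}(G)$ and the distance is $1$. This single remark disposes of almost everything. The remaining case $\langle x\rangle\le\langle y\rangle$ is then handled by writing down the prime factorisations of $|x|$, $|y|$, $|G|$ and locating an explicit element of prime order that is adjacent to both $x$ and $y$ (or, in one subcase, producing a path $x - y^{p_t^{b_t}} - g - y$ of length $3$). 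The only situation where no such prime-order common neighbour exists is when $G$ is a $p$-group with a unique minimal subgroup, hence cyclic (all vertices isolated) or generalized quaternion (handled by induction on $G/N$).

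By contrast, your route via the ordinary independence graph runs into exactly the problem you flag and do not resolve. Even granting that $\diam(\tilde\Delta(G))\le 3$, your argument produces a path $g_1 - h_1 - \cdots - h_2 - g_2$ of length up to $5$, and the proposed sharpening (``choose the open subgroups $H_i$ \dots so that $h_1$ and $h_2$ can be taken to lie in a common minimal generating set of a single open subgroup'') is not an argument: there is no mechanism offered for why such $h_1,h_2$ exist with the required adjacency to $g_1,g_2$. The Tarski basis theorem does not give you control over \emph{which} elements survive in a minimal generating set of a prescribed size. Your analysis of the case $G=\langle g\rangle$ is also confused: by Lemma~\ref{inizio} such a $g$ is \emph{always} isolated in $\tilde\Gamma(G)$, not only in the $p$-group case, and in fact it is isolated in $\tilde\Gamma_{\rm{virt}}(G)$ as well (it lies in no proper subgroup), so this case is vacuous rather than handled.

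The moral is that for $\tilde\Gamma_{\rm{virt}}$ the natural unit is not ``minimal generating set of $G$'' but ``minimal generating set of $\langle x,y\rangle$'', and once you use that, the Frattini-element difficulty you anticipate simply evaporates: whether $x$ or $y$ lies in $\frat(G)$ is irrelevant to whether $\{x,y\}$ minimally generates $\langle x,y\rangle$.
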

\begin{proof}
	We prove the statement by induction. For this purpose, notice that it follows from Lemma \ref{ken} that if $N$ is a normal subgroup of $G$ and  $x_1N$ and $x_2N$ are adjacent vertices of
	$\tilde \Delta_{\rm{virt}}(G/N),$ then $x_1$ and $x_2$ are adjacent in
	$\tilde \Delta_{\rm{virt}}(G)$ and $\dist_{\tilde \Delta_{\rm{virt}}(G)}(x_1,x_2)\leq \dist_{\tilde \Delta_{\rm{virt}}(G/N)}(x_1N,x_2N).$
	Assume that $x_1$ and $x_2$ are distinct vertices of 	$\tilde \Delta_{\rm{virt}}(G)$. If neither $\langle x \rangle \leq
	\langle y\rangle$ nor $\langle y \rangle \leq \langle x \rangle,$ then $\{x,y\}$ is a minimal generating set of 
	$\langle x, y \rangle$ and $\dist_{\Delta_{\rm{virt}}(G)}(x,y)=1.$ 
		So we may assume 
	$\langle x \rangle \leq
	\langle y\rangle$.
	Let $|x|=p_1^{a_1}\cdots p_t^{a_t}, |y|=p_1^{b_1}\cdots p_t^{b_t}, |G|=p_1^{c_1}\cdots p_t^{c_t},$ 
	with $a_i\leq b_i\leq c_i$ for $1\leq i \leq t.$ First assume $t\neq 1.$ If $b_j=0$ for some $1\leq j\leq t,$  let $g$ be an element of $G$ of order $p_j$: then $g$ is adjacent to $x$ and $y$ and $\dist_{\tilde \Delta_{\rm{virt}}(G)}(x,y)\leq 2.$ 
	So we may assume $b_j\neq 0$ for every $1\leq j\leq t.$ Since $y$ is not isolated in $\tilde \Delta_{\rm{virt}}(G)$, we have $G\neq \langle y\rangle$. In particular there exists a prime $p$ and a $p$-element $g\in G$ with $g\notin \langle y\rangle.$ It is not restrictive to assume $p=p_t.$ If $a_i\neq 0$ for some $1\leq i \leq t-1,$ then $g$ is adjacent to $x$ and $y$ and $\dist_{\tilde\Delta_{\rm{virt}}(G)}(x,y)\leq 2.$ Otherwise
	$x - y^{p_t^{b_t}} - g - y$ is a path in $\tilde \Delta_{\rm{virt}}(G)$ and $\dist_{\tilde\Delta_{\rm{virt}}(G)}(x,y)\leq 3.$ We remain with the case when $G$ is a $p$-group. If there exists a minimal subgroup $\langle g\rangle$ of $G$ not contained in $\langle x \rangle,$ then $x$ and $y$ are adjacent to $g$. So we may assume that $G$ has a unique minimal subgroup, and consequently $G$ is either a cyclic $p$-group or a generalized quaternion group. We may exclude the first possibility, since in that case all the vertices of 
	$\tilde \Gamma_{\rm{virt}}(G)$ are isolated. In the second case, $G$ has a unique minimal subgroup, say $N,$ and $xN, yN$ are distinct non-isolated vertices of $G/N$ (which is a dihedral group), so $\dist_{\Delta_{\rm{virt}}(G)}(x,y)\leq \dist_{\tilde \Delta_{\rm{virt}}(G/N)}(x,y)\leq 3$ by induction.
\end{proof}

The bound $\diam(\tilde\Delta_{\rm{virt}}(G))\leq 3$ is best possible. 
Consider $$G=\langle a, b \mid a^4=1, b^3=1, b^a=b^{-1}\rangle$$ 
and let $x=a^2,$ $y=a^2b.$ The elements of $G$ that are adjacent to $y$ in $\tilde \Gamma_{\rm{virt}}(G)$ are precisely those of the form
$a^ib^j$ with $i$ and odd integer. If $g$ is one of these elements, then $g^2=a^2=x,$ so $g$ is not adjacent to $x$ and therefore $\diam(\tilde\Delta_{\rm{virt}}(G))\geq 3$ 

\begin{thm}
	If $G$ is a finitely generated profinite group, then $\tilde\Delta_{\rm{virt}}(G)$ is connected  and $\diam(\tilde\Delta_{\rm{virt}}(G))\leq 3.$
\end{thm}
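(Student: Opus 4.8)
The plan is to reduce the profinite statement to the finite case already established in Lemma~\ref{casofinito}, using a compactness/inverse-limit argument together with Lemma~\ref{ken}. Let $x_1,x_2$ be two distinct non-isolated vertices of $\tilde\Delta_{\rm{virt}}(G)$. The key point is that Lemma~\ref{ken} only gives an implication in one direction (adjacency downstairs lifts to adjacency upstairs), so I need to control the situation in a \emph{finite quotient} and then push the resulting path up. Concretely, since $x_i$ is non-isolated there is an open subgroup $H_i\leq G$ and a minimal generating set of $H_i$ containing $x_i$; and since $x_1\neq x_2$ there is an open normal subgroup $N_0$ with $x_1N_0\neq x_2N_0$. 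The first step is to choose an open normal subgroup $N\trianglelefteq G$, contained in $N_0$, small enough that $x_1N$ and $x_2N$ are non-isolated vertices of $\tilde\Gamma_{\rm{virt}}(G/N)$ and are still distinct.

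The main obstacle is exactly this first step: ensuring that non-isolation of $x_i$ in $G$ descends to non-isolation of $x_iN$ in the finite quotient $G/N$ for suitable $N$. This is where the infinite/procyclic exceptional cases must be handled. If $x_i$ is a topological generator of $G$, then $x_i$ is isolated in $\tilde\Gamma_{\rm{virt}}(G)$ (one cannot enlarge a generating set of an open subgroup), contradicting the hypothesis; so neither $x_i$ generates $G$. By the Theorem preceding Lemma~\ref{casofinito}, if $G$ is infinite and $\tilde\Gamma_{\rm{virt}}(G)$ had a non-trivial isolated vertex then $G$ would be procyclic — but a procyclic group that is not cyclic of prime-power order has \emph{no} non-isolated vertices either (any two elements with $\langle x\rangle \not\subseteq \langle y\rangle$, $\langle y\rangle\not\subseteq\langle x\rangle$ are already adjacent, as in the proof of Lemma~\ref{casofinito}), while $\widehat{\mathbb Z}$-type groups do have adjacent vertices. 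The cleanest route is: given that $x_1,x_2$ are non-isolated, for each $i$ fix a witness, namely an open $H_i$ and an element $z_i$ with $\{x_i,z_i,\dots\}$ a minimal generating set of $H_i$; then $\langle x_i,z_i\rangle H_i' $-type conditions (i.e. $\langle x_i,z_i\rangle$ does not lie in $\langle x_i\rangle$ or $\langle z_i\rangle$, and the relevant non-generation is witnessed by open subgroups) are all ``open'' conditions, hence survive in $G/N$ once $N$ is small enough to be contained in the finitely many open subgroups witnessing them. Choosing $N$ inside the intersection of all these (finitely many) open normal subgroups, together with $N_0$, makes $x_1N$ and $x_2N$ distinct non-isolated vertices of $\tilde\Gamma_{\rm{virt}}(G/N)$.

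The remaining steps are then routine. By Lemma~\ref{casofinito}, $\tilde\Delta_{\rm{virt}}(G/N)$ is connected with diameter at most $3$, so there is a path $x_1N=g_0N - g_1N - \cdots - g_\ell N = x_2N$ with $\ell\leq 3$ joining $x_1N$ to $x_2N$. Applying Lemma~\ref{ken} to each consecutive pair, $g_{j-1}$ and $g_j$ are adjacent in $\tilde\Gamma_{\rm{virt}}(G)$; hence $x_1 = g_0 - g_1 - \cdots - g_\ell = x_2$ is a path in $\tilde\Delta_{\rm{virt}}(G)$ of length at most $3$. (One small check: the intermediate $g_j$ produced are automatically non-isolated in $\tilde\Gamma_{\rm{virt}}(G)$ since they have a neighbour, so they genuinely lie in $\tilde\Delta_{\rm{virt}}(G)$.) This shows $\tilde\Delta_{\rm{virt}}(G)$ is connected and $\diam(\tilde\Delta_{\rm{virt}}(G))\leq 3$, and the example given after Lemma~\ref{casofinito} (viewed as a finite, hence profinite, group) shows the bound $3$ is attained.
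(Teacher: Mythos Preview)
Your argument is correct and follows the same overall strategy as the paper: pass to a suitable finite quotient $G/N$, apply Lemma~\ref{casofinito} there, and lift the resulting path of length at most $3$ back to $G$ via Lemma~\ref{ken}. The one point of difference is how non-isolation of $x_iN$ in $G/N$ is secured. The paper arranges that $G/M$ is neither a cyclic $p$-group nor a generalized quaternion group and then appeals to Theorem~\ref{preced} to conclude that the non-trivial images $xM,yM$ are non-isolated; you instead push down explicit witnesses (a minimal generating set of an open $H_i$ containing $x_i$ together with some $z_i$), noting that the finitely many ``does not generate $H_i$'' conditions defining minimality are each detected by an open normal subgroup of $G$ and hence survive modulo a small enough $N$. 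Your route has the virtue of sidestepping the classification of isolated vertices entirely. (The digression in your second paragraph about procyclic groups is somewhat garbled---you assert both that such groups have no non-isolated vertices and that $\widehat{\mathbb Z}$ has adjacent vertices---but it is never used once you switch to the explicit-witness argument, so it does not affect correctness.)
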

\begin{proof}
By Proposition \ref{casofinito}, we may assume that $G$ is infinite and that $G$ is not a procyclic pro-$p$-groups (otherwise all the vertices of  $\tilde\Gamma_{\rm{virt}}(G)$ are isolated).
	Assume that $x$ and $y$ are non-isolated distinct vertices of 	$\tilde\Delta_{\rm{virt}}(G)$. There exists three open normal subgroups $N_1,$ $N_2$ and $N_3$ with $x\notin N_1,$ $y\notin N_2$ and $xy^{-1}\notin N_3.$ Set $M=N_1\cap N_2\cap N_3.$ Then $xM$ and $yM$ are two different vertices non-trivial vertices of $G/M.$ We may choose $M$ in such a way that $G/M$ is neither a cyclic $p$-group nor a generalized quaternion group (this is because a proper epimorphic image of a generalized quaternion group is not generalized quaternion). But then $xM$ and $yM$ are non-isolated vertices of $\tilde \Gamma_{\rm{virt}}(G/M)$, so by 
Lemmata \ref{ken} and \ref{casofinito} 	 $\dist_{\tilde\Delta_{\rm{virt}}(G)}(x,y)\leq \dist_{	\tilde\Delta_{\rm{virt}}(G/N)}(xN,yN)\leq 3.$
 \end{proof}

\end{document}